\def\Zhat{\widehat{\Z}}
\def\P{\mathfrak{p}}
\def\Vbar{\overline{V}}
\def\Kbar{\overline{K}}
\def\QQ{E}
\def\GL{\mathrm{GL}}
\def\p{\mathfrak{p}}
\def\PP{\mathbf{P}}
\def\PSL{\mathrm{PSL}}
\def\R{\mathbf{R}} 
\def\Z{\mathbf{Z}}
\def\Q{\mathbf{Q}}
\def\F{\mathbf{F}}
\def\Z{\mathbf{Z}}
\def\R{\mathbf{R}}
\def\OL{\mathcal{O}}
\def\Frob{\mathrm{Frob}}
\def\C{\mathbf{C}}
\def\Gal{\mathrm{Gal}}
\def\SL{\mathrm{SL}}
\def\H{\mathbf{H}}
\renewcommand\mathbb{\mathbf}
\newtheorem{theorem}[subsection]{Theorem}
\newtheorem{lemma}[subsection]{Lemma}
\newtheorem{df}[subsection]{Definition}
\theoremstyle{remark}
\newtheorem{remark}[subsection]{Remark}
\begin{document}

\title{The adelic closure of  triangle groups}

\author[F.~Calegari]{Frank Calegari}  \email{fcale@math.uchicago.edu} \address{The University of Chicago,
5734 S University Ave,
Chicago, IL 60637, USA}

 \thanks{The author was supported in part by NSF Grants DMS-2001097 and DMS-2450123.}

\begin{abstract}
Motivated by questions arising from billiard trajectories in the regular $n$-gon, McMullen~\cite{CM} defined a pair of functions~$\kappa$ and $\delta$ on the cusps~$c$ of the corresponding triangle group~$\Delta_n$ inside $\mathrm{SL}_2({\mathcal{O}})$, where ${\mathcal{O}} = \Z[\zeta_n+ \zeta^{-1}_n]$. McMullen asks for which $n$ these functions are \emph{congruence}, that is,  when they only depend on the image of the cusp $c \in \mathbf{P}^1(\mathcal{O})$ in $\mathbf{P}^1(\mathcal{O}/N)$ for some integer~$N$. In this note, we  answer McMullen's questions. We obtain our results by computing the exact closure of $\Delta_n \subset \mathrm{SL}_2({\mathcal{O}})$ inside $\SL_2(\widehat{{\mathcal{O}}})$, where $\widehat{{\mathcal{O}}}$ is the profinite completion of ${\mathcal{O}}$.
\end{abstract}

\maketitle

\def\wDelta{\widehat{\Delta}}
\def\wGamma{\widehat{\Gamma}}
\def\wQ{\widetilde{Q}}

\section{Introduction}

The goal of this note is to answer some questions concerning triangle groups first
raised in~\cite{CM}.
Fix an integer~$n \ge 3$, and let~$\zeta = e^{2 \pi i/n}$
be an~$n$th root of unity in~$\C$.
In this paper, we shall be concerned with the family of triangle groups~$\Delta = \Delta_n \subset \SL_2(\R)$
given explicitly by~$\Delta = \langle T,U,R \rangle$, where:
$$T = \left( \begin{matrix} 1 & 2 + \zeta + \zeta^{-1} \\
0 & 1 \end{matrix} \right), \quad
U = - \left( \begin{matrix} 1 & 0 \\ -1 &  1 \end{matrix} \right), \quad
R = UT.$$
The element~$R$ has order~$n$, and the quotient~$\H/\Delta$ is
a hyperbolic orbifold of genus zero with signature~$(n/2,\infty,\infty)$ when~$n$
is even and~$(2,n,\infty)$ when~$n$ is odd.
If~$K = \mathbf{Q}(\zeta+\zeta^{-1})$ and~$\OL = \OL_K=\Z[\zeta+\zeta^{-1}]$,  then~$\Delta \subset \Gamma:=\SL_2(\OL)$.
Let~$\widehat{\OL}$ denote the closure of~$\OL$ inside the finite adeles~$\mathbf{A}^{\kern-0.1em{f}}_K$.
 There is a natural map
$$\Delta \rightarrow \SL_2(\OL) \hookrightarrow \SL_2(\widehat{\OL}).$$
Let~$\wDelta$  denote the closure of the image of~$\Delta$.
The adelic group~$\wGamma:=\SL_2(\widehat{\OL})$ has a natural topology as a profinite group; we can
also view it as  the congruence closure of~$\SL_2(\OL)$, that is, the profinite group given by the projective
limit of the groups~$\SL_2(\OL/d)$ over all integers~$d$, or equivalently, the
limit of the groups~$\SL_2(\OL/\alpha)$ over $\alpha \in \OL$. These groups are cofinal
with respect to each other since there is a surjection $\SL_2(\OL/d) \rightarrow \SL_2(\OL/\alpha)$ with $d = N_{K/\Q}(\alpha)$.
As noted in~\cite[Prop~1.15]{CM}, the group~$\wDelta$ has finite index in~$\wGamma$.
The goal of this note is to describe~$\wDelta$ 
as precisely as possible.

\begin{theorem} \label{awayfrom2} 
The congruence closure~$\wDelta$  of~$\Delta$ is a direct product of the  closures
$$\wDelta_p
\subseteq \wGamma_p = \SL_2(\OL \otimes \Z_p) = \prod_{\p | p} \SL_2(\OL_{\p})$$
 of~$\Delta$ for all primes~$p$ of~$\Z$.
Moreover, ~$\wDelta_p$ has index one for all odd primes~$p$ outside the following  two exceptions,
where in both cases there is a unique prime~$\p$ above~$p$ in~$\OL$.
\begin{enumerate}
\item $n = 2p^k$, $k \ge 1$,  and~$\wDelta_p$ is the preimage 
of~$\Z/p\Z \subset \SL_2(\OL/\p) = \SL_2(\F_p)$ 
with index~$p^2 - 1$. 
\item $n = 5p^k$, $k \ge 0$, $p = 3$, and~$\wDelta_p$ is the preimage of~$A_5 \subset 
A_6 \simeq \PSL_2(\F_9) = \PSL_2(\OL/\p)$.
 \end{enumerate}
\end{theorem}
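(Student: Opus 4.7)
The plan is to reduce the global question to a local one at each rational prime $p$ via strong approximation, and then to compute the local closures $\wDelta_p$ using Dickson's classification of subgroups of $\SL_2$ over finite fields. Since $\Delta$ is a lattice in $\SL_2(\R)$, it is Zariski dense in $\SL_2/K$, so by a theorem of Matthews--Vaserstein--Weisfeiler (or a Nori-style refinement) the closure $\wDelta$ is an open subgroup of $\wGamma$; in particular $\wDelta$ surjects onto an open subgroup of each $\SL_2(\OL \otimes \Z_p)$.

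To deduce that $\wDelta$ actually factors as the direct product $\prod_p \wDelta_p$, I would apply Goursat's lemma. Any obstruction to the product comes from a common nonabelian simple quotient of $\wDelta_p$ and $\wDelta_q$ for distinct primes $p \ne q$. The simple quotients of an open subgroup of $\SL_2(\OL \otimes \Z_p)$ are either abelian of $p$-power order or of the form $\PSL_2(\F_{p^f})$ for some $\p \mid p$. Abelian $p$-pieces at different primes are distinguishable by residue characteristic, and the nonabelian simple groups $\PSL_2(\F_{p^f})$ for distinct $p$ almost always have distinct orders; the only coincidence involving small residue characteristic is $\PSL_2(\F_4) \simeq \PSL_2(\F_5) \simeq A_5$, which has to be eliminated by hand (using that for $p=2$ the image is controlled by results stated later in the paper). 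With this disjointness in place, Goursat forces the product decomposition.

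For each prime $p$, the remaining task is to compute the image of $\Delta$ in $\SL_2(\OL/\p)$ for every $\p \mid p$: by standard level-lifting (the first congruence subgroup $\Gamma(\p)$ is pro-$p$ and its graded pieces are the adjoint representation of $\SL_2(\F_{p^f})$ on $\mathfrak{sl}_2$), surjection onto the residue reduction forces $\wDelta_p = \wGamma_p$. By Dickson, a proper subgroup of $\SL_2(\F_q)$ is contained in a Borel, a torus normalizer, a subfield subgroup, or has projective image $A_4$, $S_4$, or $A_5$. The reductions of $T$ and $U$ are (up to sign) unipotents that fail to lie in any common Borel once $2 + \zeta + \zeta^{-1} \not\in \p$; $\bar R = \bar U \bar T$ has order equal to the order of the $n$-th root of unity $e^{2\pi i/n}$ in the residue field, which excludes the small-image alternatives $A_4$, $S_4$, $A_5$ as soon as $n$ or the residue field is large enough; and a trace argument using that $\Tr(\bar R)$ generates $\OL/\p$ over the prime field rules out subfield subgroups. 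Going through the remaining short list of $(n,p)$ pairs isolates precisely the two exceptional cases.

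In case~(1), the congruence $\Phi_n(-1) \equiv 0 \pmod p$, equivalent to $n = 2p^k$ for odd $p$, forces $\bar T = I$, so the image mod $\p$ is cyclic, generated by $\bar U$ inside a Borel. The assertion that $\wDelta_p$ is the full preimage of this cyclic subgroup then reduces to showing $\Gamma(\p) \subseteq \wDelta_p$, which I would deduce from the fact that the commutators $[g,T]$ for $g$ ranging over $\wDelta_p$ topologically generate $\Gamma(\p)$, using the irreducibility of the adjoint action of $\bar U$ on $\mathfrak{sl}_2(\F_p)$. In case~(2), $n = 5p^k$ with $p = 3$, the $\p$-adic reduction kills the $3$-part of $\zeta_n$, so the mod-$\p$ image is the same as for $n = 5$; an explicit computation in $\PSL_2(\F_9)$ using the classical isomorphism $\PSL_2(\F_9) \simeq A_6$ identifies it as $A_5$. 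I expect the main obstacle to be case~(2): one must carefully distinguish the image both from all of $A_6$ and from the various smaller subgroups by tracking trace and order conditions for $\bar R$, $\bar U$, and $\bar T$, and then pin down which of the two $S_6$-conjugacy classes of $A_5$ one lands in.
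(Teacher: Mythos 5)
Your overall architecture (Goursat to split over primes, Dickson at the residue field, then lift) matches the paper, but the crux of the lifting step is glossed over in a way that actually fails at ramified primes. You assert that ``surjection onto the residue reduction forces $\wDelta_p = \wGamma_p$'' by the standard pro-$p$ Frattini/adjoint argument. This is false when $\p \mid p$ is ramified in $\OL$: for instance $\SL_2(\Z_p) \subset \SL_2(\OL_\p)$ surjects onto the residue $\SL_2(\F_p)$ when $\OL_\p/\Z_p$ is totally ramified with residue degree $1$, yet is certainly not dense. Burnside's basis theorem only tells you that a closed subgroup $H$ with $H\Gamma(\p^2)=\Gamma(\p)$ is all of $\Gamma(\p)$; you still have to \emph{produce} a nontrivial element of $\wDelta_p(\p)/\wDelta_p(\p^2)$, and that is exactly where the real work is. In the unramified case the paper can use $T^{p^r}$ for free (Lemma~\ref{unramp}), but in the ramified cases (Lemma~\ref{mnot2}) it must construct explicit elements: $R^m$ when $n = mp^k$ with $m>2$, and an ad hoc $\gamma = V^{p^2-1}$ with $V = RU$ together with a Newton-polygon valuation computation when $m=1$. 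Your proposal contains no analogue of this step, which is the only substantive content at ramified odd $p$.

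A second, smaller, error: in case~(1) you claim $\Gamma(\p) \subseteq \wDelta_p$ follows from ``the irreducibility of the adjoint action of $\bar U$ on $\mathfrak{sl}_2(\F_p)$.'' The adjoint action of a single unipotent element is certainly not irreducible (it has a nontrivial fixed line, a two-dimensional invariant subspace, etc.). What the paper actually does (Lemma~\ref{ramp}) is exhibit four explicit elements $U^{p+1}$, $T$, $VTV^{-1}$, $V^2TV^{-2}$ whose images span the three-dimensional Frattini quotient $M/\Phi(M)$, and only then invoke Burnside. Finally, your worry about $\PSL_2(\F_4)\simeq \PSL_2(\F_5)$ is misdirected: the image at $p=2$ is dihedral (cf.\ Theorem~\ref{2part}), so $\PSL_2(\F_4)$ never arises; the coincidence one should at least mention is $2.A_5 \simeq \SL_2(\F_5)$, relevant to separating $p=3$ from $p=5$ when $n=5\cdot 3^k$.
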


This follows from Lemmas~\ref{pbyp},~\ref{mnot2}, and~\ref{ramp}.
Theorem~\ref{awayfrom2} determines~$\wDelta$
 up to a precise description of the~$2$-adic closure~$\wDelta_2$,
which we turn to now.
In this introduction, we only give the statement in the case when~$n$ is not a  power of~$2$,
and leave the statement of the remaining case when~$n = 2^k$
to the main body of the text (see Theorem~\ref{2powertheorem}).
We first recall some standard definitions:

\begin{df} \label{defJ}
For any (commutative) ring~$A$, any ideal~$I$, and any group~$G \subset \SL_2(A)$,  let~$G(J)$ denote the kernel of the homomorphism~$G \rightarrow \SL_2(A/J)$.
Furthermore, we let~$M^0(A)$ denote the additive group of~$2 \times 2$ matrices over~$A$ with trace zero.
\end{df}

If~$A$ is a ring with~$2=0$, then~$M^0(A)$ has a subgroup isomorphic to~$A$ consisting of diagonal matrices.
The following is proved in Section~\ref{2adicfirst} (note that Theorem~\ref{2part}~(\ref{cee}) is~\cite[Thm~1.1]{CM}).

\begin{theorem} \label{2part}
Suppose that~$n$ is not a power of~$2$. Then:
\begin{enumerate}
\item There is an equality~$\wDelta(4)_2 = \wGamma(4)_2 = \SL_2(\OL \otimes \Z_2)(4)$.
\item Consider the homomorphism
$$\wDelta(2)/\wDelta(4) \simeq \Delta(2)/\Delta(4) \rightarrow 
\Gamma(2)/\Gamma(4) = 
I + 2 M^0(\OL/2).$$
The index of the image is~$|\OL/2|/2$. The intersection of the image with the
 subgroup~$\OL/2$ of diagonal matrices of the target consists only of~$\{0,I\}$.
 \item  \label{cee} The image of~$\Delta/\Delta(2)$ inside~$\SL_2(\OL/2)$ is isomorphic to~$D_{n'}$
 where~$n' = n/2$ if~$n$ is even and~$n' = n$ otherwise.
\end{enumerate}
In particular, the~$2$-adic index is given by
$\displaystyle{ \frac{|\SL_2(\OL/2)|}{2n'} \cdot \frac{|\OL/2|}{2}}$.
\end{theorem}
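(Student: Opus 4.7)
The overall plan is to pin down $\wDelta_2$ layer by layer along the $2$-adic congruence filtration $\wGamma_2 \supset \wGamma(2)_2 \supset \wGamma(4)_2$: parts~(\ref{cee}), (2), and (1) describe the image of $\wDelta_2$ in $\wGamma_2/\wGamma(2)_2$, in $\wGamma(2)_2/\wGamma(4)_2$, and finally inside $\wGamma(4)_2$ itself, and multiplying the three layer indices yields the stated formula. Part~(\ref{cee}) is taken directly from~\cite[Thm~1.1]{CM}, so only (1) and (2) require work.

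For part~(2) I would work at the level of the abelian quotient $\wGamma(2)/\wGamma(4)$, using the identification $I + 2X \mapsto X$ with $M^0(\OL/2)$. Under this identification multiplication becomes addition and conjugation by $g \in \SL_2(\OL/2)$ becomes the usual adjoint action on trace-zero matrices. Elements of $\wDelta(2)/\wDelta(4)$ arise from two sources: commutators of the generators $T,U,R$, and products that reduce to the identity modulo $2$ via the dihedral relations of~(\ref{cee}) (such as appropriate powers of $T$, $R$, or $TR$). I would write down explicit representatives, compute their images in $M^0(\OL/2)$ from the matrix formulas, and identify the $\OL/2$-submodule $N$ they generate under the adjoint action of the dihedral image. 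The assertion is that $N$ has index $|\OL/2|/2$ in $M^0(\OL/2)$ and meets the scalar submodule $\OL/2 \cdot I \subset M^0(\OL/2)$ only in $\{0,I\}$; both claims reduce to linear algebra over the finite ring $\OL/2$ once the explicit generators are in hand.

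For part~(1) I would use the Burnside basis theorem for the finitely generated pro-$2$ group $\wGamma(4)_2$: a closed subgroup equals the whole group iff it surjects onto the Frattini quotient. Because we are at level~$4$ rather than~$2$, the $2$-adic logarithm converges on $\wGamma(4)_2$ and induces an isomorphism $\wGamma(4)/\wGamma(8) \simeq M^0(\OL/2)$ which identifies the Frattini quotient. It therefore suffices to produce, for a spanning set of $M^0(\OL/2)$, elements of $\Delta \cap \Gamma(4)$ whose logarithms realise those vectors. These are built from the level-$2$ generators of~(2) via the identity $[I+2A, I+2B] \equiv I + 4[A,B] \pmod 8$, which converts Lie brackets of vectors in $N$ into translations at level~$4$; further vectors come from squares of level-$2$ elements. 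The hypothesis $n \neq 2^k$ enters precisely here: it guarantees an odd-order unit in $\OL/2$ whose adjoint action genuinely mixes the three components of $M^0(\OL/2)$, so that the adjoint orbit of $N$ together with $[N,N]$ exhausts $M^0(\OL/2)$.

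The main obstacle, and the point most requiring careful bookkeeping, is the coupled verification across parts~(1) and~(2) that the generators of $N$ produced from $T,U,R$ (together with their $\SL_2(\OL/2)$-conjugates and iterated brackets) precisely exhaust $M^0(\OL/2)$ at level~$4$ while simultaneously producing the level-$2$ defect of index $|\OL/2|/2$ that avoids the scalars. The excluded case $n = 2^k$ fails exactly at this mixing step and is handled separately in Theorem~\ref{2powertheorem}.
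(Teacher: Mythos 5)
Your plan is sound and tracks the paper's overall architecture closely: part~(\ref{cee}) is quoted from~\cite{CM}, part~(2) is a calculation in the abelian layer $\Gamma(2)/\Gamma(4) \simeq M^0(\OL/2)$, and part~(1) is a pro-$2$ Burnside (Frattini) argument bootstrapped from level~$2$ data. Where you genuinely diverge from the paper is in the key step of establishing that the image of $\Delta(2)/\Delta(4)$ contains all of the submodule $\wQ$. The paper proves this by refining the filtration to the radical $P = \prod_{\p|2}\p$, reducing to irreducibility of $Q$ over $\F_2[D_m]$ (Lemma~\ref{isirreducible}), and then producing explicit elements $X_{i,j}, W_{i,j}$ whose $P$-valuations are controlled by Lemma~\ref{subspacenew}. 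You instead propose to compute the $\F_2[D_{n'}]$-module generated by the images of the normal generators $T^2, U^2, R^{n'}$ directly in $M^0(\OL/2)$ — and this does work: $T^2, U^2$ give $(0,1,0)$ and $(0,0,1)$ modulo scalars, the action is $\OL/2$-linear with $T - 1: (b,c) \mapsto (c\pi',0)$ and $U - 1: (b,c) \mapsto (0,b\pi')$ where $\pi' = \zeta + \zeta^{-1}$, and since $\OL/2 = \F_2[\pi']$ one gets $(0,\pi'^i,0)$ and $(0,0,\pi'^j)$ for all $i,j$, hence all of $\wQ$. This is precisely the argument the paper reserves for the $n = 2^k$ case (Lemma~\ref{generate}); it is cleaner than the $P$-adic refinement and avoids Lemma~\ref{subspacenew} entirely. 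What the paper's route buys is uniformity with the ramified-prime calculations in the rest of the paper, but your route is a legitimate and arguably simpler verification.

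Two imprecisions worth flagging. First, the role of the hypothesis $n \ne 2^k$ is not that it produces ``an odd-order unit whose adjoint action mixes the components''; the operative fact is that when $m > 1$ the element $\pi' = \zeta + \zeta^{-1}$ is a \emph{unit} in $\OL/2$. This is used twice: once so that the $D_{n'}$-orbit computation above reaches all of $\wQ$ at level~$2$, and once so that the commutators $AB - BA$ with $A, B$ in $\wQ$, which always equal $(bf+ce)\pi' I$, span the full scalar subgroup $\OL/2 \cdot I$ at level~$4$ rather than the proper subgroup $\pi'\OL/2 \cdot I$. When $m = 1$ this fails and the result is genuinely different, as in Theorem~\ref{2powertheorem}. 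Second, invoking the $2$-adic logarithm to identify the Frattini quotient of $\wGamma(4)_2$ is unnecessary; the identification $\wGamma(4)/\wGamma(8) \simeq M^0(\OL/2)$ is elementary, and the verification that $\Phi(\wGamma(4)_2) = \wGamma(8)_2$ is a short $P$-adic valuation count using $(I + \pi^r A)^2 = I + 2\pi^r A + \pi^{2r}A^2$ for $r \ge 2e$.
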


 One application
of these results is to answer a number of questions first raised in~\cite{CM}.
(The paper~\cite{CM} is itself an offshoot of a number of other papers --- including~\cite{BilliardsRegular} and~\cite{BilliardsHeights} --- whose goal is to understand billiards
in regular polygons.)  One of the intentions of~\cite{CM} was to distill a number of purely group theoretical
and number theoretical questions in a transparent way, and~\cite{CM} makes no direct reference
to billiards. In this paper,
we similarly shall not mention these connections, but rather refer the reader 
(as a starting point) to~\cite{CM,BilliardsRegular,BilliardsHeights}.
Instead, the questions raised in~\cite{CM} are phrased directly in terms of the arithmetic of the cusps
of~$\Delta$, and we now recall the basic story of these cusps.
The group~$\Delta \subset \SL_2(\R)$ acts on the usual upper
half plane~$\H$ with finite covolume. The quotient curve (orbifold) $\H/\Delta$ has either one or two cusps
($\infty$ and~$0$) depending on the parity of~$n$; in the usual way
one  can think about these cusps as certain orbits of~$\PP^1(K) \subset \PP^1(\R)$ under the action of~$\Delta$.
Very explicitly, if
\begin{equation}
\label{gamma}
\gamma = \left( \begin{matrix} a & b \\ c & d \end{matrix} \right) \in \Delta,
\end{equation}
then~$a/c \in \PP^1(K)$ lies in the orbit of~$\infty$ and~$b/d$ lies in the orbit of~$0$.
If~$[K:\Q] \le 2$, then every point of~$\PP^1(K)$ is a cusp. If~$[K:\Q] > 2$,
this is no longer true, and there is no known explicit characterization of
the cusps. Even when~$[K:\Q] = 2$
(which occurs for~$n=5$, $8$, and~$12$),
there are still many mysteries regarding the cusps.  In these cases, given~$x \in \PP^1(K)$, then there exists a~$\gamma \in \Delta$
as in~(\ref{gamma}) such that either~$a/c$ or~$b/d$ is equal to~$x$. (For more general~$n$ this still holds at least if
one assumes that~$x$ is a cusp of~$\Delta$.) But the column~$[a,c]$ of~$\gamma$ turns out to be unique up to sign,
whereas (assuming~$[K:\Q] > 1$ so the unit group of~$K$ is infinite) there are many possible pairs~$(a,c)$ for which~$a/c=x$. It is precisely the map which sends a cusp~$x \in \PP^1(K)$ to a representative~$[a,c]$ (up to sign)
with~$a/c = x$ which is poorly understood.
In~\cite{CM}, certain simple functions~$\delta$ and~$\kappa$ on the set of cusps
are considered.
For example, when~$n$ is even, the map~$\kappa$ is the function on the set of cusps~$x \in \PP^1(K)$ which is~$0$
when~$x$ is in the orbit of the cusp~$\infty$ and~$1$ if~$x$ is in the orbit of~$0$. The definition of~$\delta$
is slightly more complicated and is recalled in section~\ref{deltakappa}.
McMullen raises
 the problem
 as to what extent these maps are \emph{congruence}, that is, whether they only depend on the image
of~$x \in \PP^1(\OL)$ in~$\PP^1(\OL/N)$ for some integer~$N$.
It is shown in~\cite{CM} that, when the prime factorization of~$n$ has certain explicit forms, these maps are congruence.
However, it is also shown~\cite[Thm~1.13]{CM}
that~$\delta$ is \emph{not} congruence when~$n = 12$.
In this paper, we extend this result to show that~$\delta$ and~$\kappa$ are \emph{never} congruence invariants in all
the remaining cases not addressed in~\cite{CM}, generalizing the result (and method) for~$\delta$ addressed
in~\cite{CM} when~$n=12$. In particular, we have (see Theorems~\ref{maindeltatwo} and~\ref{kappacong}
and \S ~\ref{curtadded}):

\begin{theorem} \label{curty} The invariant~$\kappa$ is congruence unless~$n=2m$ where~$m$ is odd and has at least two
prime factors.
The invariant~$\delta$ 
as a function on the cusps equivalent to $\infty$
is congruence if and only if either~$4 \nmid n$
or~$n = 2^k$.
The invariant~$\delta$  as a function
on all the cusps is congruence if and only if  either $n = 2^k$,
$n = 2 p^k$ for an odd prime $p$, or $n$ is odd.
\end{theorem}

As suggested by the arguments of~\cite{CM}, 
Theorem~\ref{curty} can be deduced from
a sufficiently precise understanding
of the adelic closure~$\wDelta$ of~$\Delta$ in~$\wGamma = \SL_2(\widehat{\OL})$
provided by Theorem~\ref{awayfrom2} and~\ref{2part}.

\begin{remark}
If $E$ is an elliptic curve over a number field $F$, then, by considering
the collection of $p$-adic Galois representations associated to the Tate
module $T_p(E)$ for each prime $p$, one obtains a representation:
$$\rho_E: G_{\Q} \rightarrow \GL_2(\Zhat).$$
In~\cite{Serre}, Serre proved that the image of~$\rho_E$ has \emph{finite index} in 
$\GL_2(\Zhat)$. Similar Galois representations
arise  from
the theory of modular forms (and its generalizations).
 The problem of precisely determining the images of such representations
 goes back at least 50 years --- see in particular~\cite{SWD}.
 The main theorem of this paper shares many similarities with big image
theorems in the theory of Galois representations, and so unsurprisingly a 
number of our methods are quite similar to those of~\cite{SWD} and its generalizations.
The analogy between these problems was already noted in~\cite{CM}.
\end{remark}

Combining Theorems~\ref{awayfrom2} and~\ref{2part} (together with Theorem~\ref{2powertheorem} which explains what
happens when~$n=2^k$), we obtain the following formula for the full index of~$\wDelta$ in~$\wGamma$, namely:

\begin{theorem} The index of~$\wDelta$ in~$\wGamma$ for~$n \ge 3$ is given by
$\displaystyle{ \frac{|\SL_2(\OL/2)|}{2n'} \cdot \frac{|\OL/2|}{2}}$ 
where~$n'=n/2$ if~$n$ is even and~$n$ if~$n$ is odd,
times the following extra factors in the  following exceptional circumstances:
\begin{enumerate}
\item A factor of~$2$ if~$n = 2^k$.
\item A factor of~$6$ if~$n =5 \cdot 3^k$ and~$k \ge 0$.
\item A factor of~$p^2-1$ if~$n = 2 \cdot p^k$ and~$k \ge 1$.
\end{enumerate}
\end{theorem}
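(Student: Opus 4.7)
The plan is to derive the index formula by assembling the three previous theorems. By Theorem~\ref{awayfrom2}, the closure $\wDelta$ is the direct product of its local factors $\wDelta_p \subseteq \wGamma_p$ over all rational primes $p$, so
$$[\wGamma : \wDelta] = \prod_p [\wGamma_p : \wDelta_p],$$
and I would compute each local factor separately and multiply.

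For odd $p$, Theorem~\ref{awayfrom2} says $[\wGamma_p : \wDelta_p] = 1$ outside of the two listed exceptional cases. In case~(1), $n = 2p^k$ with $k \ge 1$, the subgroup $\Z/p\Z$ has index $|\SL_2(\F_p)|/p = (p(p^2-1))/p = p^2-1$ in $\SL_2(\F_p)$, producing the factor $p^2 - 1$ in item~(3) of the statement. In case~(2), with $n = 5 \cdot 3^k$, the subgroup $A_5 \subset A_6$ has index $360/60 = 6$, producing the factor $6$ in item~(2). For all other odd $p$ the local index is $1$ and hence does not contribute.

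For $p = 2$, I would split into two subcases. When $n$ is not a power of $2$, Theorem~\ref{2part} directly yields the 2-adic index as $|\SL_2(\OL/2)|/(2n') \cdot |\OL/2|/2$: part~(1) gives $\wDelta(4)_2 = \wGamma(4)_2$ so only the quotient modulo $4$ contributes; part~(3) identifies the image modulo $2$ as a dihedral group of order $2n'$ inside $\SL_2(\OL/2)$, contributing the factor $|\SL_2(\OL/2)|/(2n')$; and part~(2) contributes the factor $|\OL/2|/2$ from the layer between level $2$ and level $4$. When $n = 2^k$, I would invoke Theorem~\ref{2powertheorem} from the main text, whose statement (by design) gives the same base formula augmented by an extra factor of $2$, yielding item~(1).

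The final step is purely bookkeeping: multiply these factors together, observing that the exceptional circumstances (1), (2), (3) are mutually exclusive (since $n = 2^k$ forces $n$ to be a power of $2$, while $n = 5 \cdot 3^k$ and $n = 2 \cdot p^k$ with $p$ odd are each incompatible with the others), so no case is double-counted. The only mild subtlety is to confirm the numerology of $n'$: that the formula uses $n' = n/2$ for even $n$ (as dictated by the dihedral image in Theorem~\ref{2part}~(3)) and $n' = n$ for odd $n$, and that in the exceptional case $n = 2p^k$ the correct value $n' = p^k$ is consistent with the odd-prime correction $p^2 - 1$ being the \emph{only} extra factor beyond the base formula. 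Since all the genuine content lies in the earlier theorems, the sole obstacle here is this final consistency check.
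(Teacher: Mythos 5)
Your proposal is correct and is essentially the paper's own derivation: the paper presents this theorem as an immediate consequence of combining Theorem~\ref{awayfrom2} (product decomposition and the two odd-prime exceptions with indices $p^2-1$ and $6$), Theorem~\ref{2part} (the $2$-adic factor $|\SL_2(\OL/2)|/(2n')\cdot|\OL/2|/2$ when $n$ is not a power of $2$), and Theorem~\ref{2powertheorem} (the extra factor of $2$ when $n=2^k$). Your mutual-exclusivity check at the end is the same observation the paper records in the remark following the theorem.
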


Note that at most one of these exceptional circumstances can occur for any given~$n$.

\begin{remark}
We can also write out the order of~$|\SL_2(\OL/2)| \cdot |\OL/2|$
more explicitly as follows:
Let~$d$ be the degree of~$K$, so~$d = \varphi(n)/2$.
With~$n = 2^k \cdot m$ with~$m$ odd, and~$n \ge 3$.
\begin{enumerate}
\item Let $e$ be the ramification degree of~$2$ in~$K$, so, if~$m > 1$, then~$e = \varphi(2^k)$,
and if~$m=1$, then~$e = \varphi(n)/2 = \varphi(2^{k})/2$.
\item Let $f$ be the order of the decomposition group of~$2$ in~$K$, so~$f$ is the order of~$2$ in~$(\Z/m\Z)^{\times}/(\pm 1)$.
\item Let $r$ be the number of primes above~$2$ in~$\OL$, so~$d = er\kern-0.1em{f}$. 
\end{enumerate}
Then~$|\OL/2| = 2^d$, and
$$
\begin{aligned}
|\SL_2(\OL/2)| = & \  |\F|^{3r(e-1)} \cdot |\SL_2(\F)|^r = |\F|^{3r(e-1)}  \cdot (|\F|^3 - |\F|)^r, \\
 = & \ |\F|^{3re} \cdot \left(1 - \frac{1}{|\F|^2}\right)^{r} \\
 = & \ 2^{3d} \left(1 - \frac{1}{2^{2f}}\right)^{r}. \end{aligned}
$$
An explicit table of the indices for~$3 \le n \le 16$ is given below:
\medskip
\begin{center}
\begin{tabular}{|c|c|c|c|c|c|c|c|c|c|c|c|c|c|c|}
\hline
$n$ & $3$ & $4$  & $5$  & $6$  & $7$  & $8$  & $9$ & $10$ & $11$ & $12$& $13$& $14$& $15$& $16$
\\
\hline
$[\wGamma:\wDelta] \rule{0pt}{2.5ex}$ &
$1$ &
$3$ &
$72$ &
$8$ &
$144$ &
$24$ &
$112$ &
$288$ &
$23808$ &
$8$ &
$322560$ &
$6912$ &
$6528$ &
$3072$  \\
\hline
\end{tabular}
\end{center}
\end{remark}


\subsection{Acknowledgments}

I would like to thank  Curt McMullen both for encouraging me to write this note
and also for  a number of useful discussions, remarks, and references.

\section{Some preliminaries}

In this section, we record some preliminary facts and lemmas which will be used in the sequel.
In addition to the field~$K = \Q(\zeta+\zeta^{-1})$, we shall also consider~$L = \Q(\zeta)$.
If we write~$n = 2^k m$ with~$m$ odd, then~$\Gal(L/\Q) \simeq (\Z/n \Z)^{\times}
= (\Z/m \Z)^{\times} \times (\Z/2^k \Z)^{\times}$, whereas~$\Gal(K/\Q)$ is the quotient of this group
by the image of complex conjugation, given explicitly by~$c = (-1,-1)$. If~$m > 1$, it follows from this description
and the natural identification of the inertia group at~$2$ of~$\Gal(L/\Q)$ with~$(\Z/2^k \Z)^{\times}$
that~$L/K$ is unramified at all primes above~$2$. 
The ramification index~$e$ of (both) fields at the prime~$2$ is~$e = \varphi(2^k)$
which equals~$2^{k-1}$ if~$k \ge 1$ and~$1$ otherwise,
unless~$n = 2^k$ in which case~$e = 2^{k-2}$ for~$K$.
 Although~$\Delta$ is naturally a subgroup of~$\SL_2(\OL_K)$,
it will sometimes be convenient to work in the larger group~$\SL_2(\OL_L)$ where certain elements are naturally
diagonalizable, making a number of explicit computations easier.

\subsection{The adjoint representation}
Let~$J$ be an ideal of~$\OL_K$. The ideal~$J$ is principal in the ring~$\OL/J^2$
and any choice of generator gives an isomorphism~$J/J^2 \simeq \OL/J$.
 If~$\Gamma = \SL_2(\OL_K)$, then
 (with $\Gamma(J)$ defined as in Definition~\ref{defJ})
 there is a corresponding identification 
 \begin{equation}
 \label{firstadjoint}
 \Gamma(J)/\Gamma(J^2) \simeq I + M^0(J/J^2) \simeq  M^0(\OL/J),
 \end{equation}
  where~$M^0(A)$ denotes the~$2 \times 2$ matrices in~$A$ with trace zero (the trace
 zero condition comes from the fact that elements in~$\Gamma$ have determinant one).
 Not only is~(\ref{firstadjoint}) an isomorphism of (abelian) groups, but there is also a natural
 action of~$\Gamma/\Gamma(J) \simeq \SL_2(\OL/J)$ on~$\Gamma(J)/\Gamma(J^2)$
 such that the corresponding action on~$M^0(\OL/J)$ is given by conjugation.
 More generally, for any non-zero ideal~$P \subset \OL$,  we have
 $$\Gamma(JP)/\Gamma(J^2P) \simeq M^0(\OL/J)$$
 A natural approach to understanding the closure of a subgroup~$G$ of~$\SL_2(\OL_{\p})$ is 
  to consider the images of $G$ in $\SL_2(\OL/\p^r)$ for each $r$, or equivalently the
 filtration of~$G$ by~$G(\p^r)$. One then wants to  understand the  graded pieces~$G(\p^r)/G(\p^{r+1})$ as
 modules for the image~$G/G(\p)$ of~$G$ in~$\SL_2(\OL/\p)$.
 
 \begin{remark}
 A basic principle  which we apply repeatedly through the paper (sometimes explicitly
 but more often implicitly) is the following. Let~$B$ be a $p$-group
or a pro-$p$ group 
 and let~$A \subset B$ be a closed subgroup.
 In order to prove that~$A=B$, it suffices,
 by Burnside's
basis theorem~\cite[Thm~12.2.1]{Marshall}, to show that~$A$ surjects onto the maximal
exponent~$p$-abelian quotient~$B/\Phi(B)$ of~$B$.
In the context of our problem, one should imagine taking~$A$
to be~$\wDelta(\p)$, and~$B \subset \wGamma(\p)$ to be the subgroup which we hope
to show is equal to~$A$. One measure of the difficulty in applying Burnside's
theorem is the size of~$B/\Phi(B)$. For the particular
problem we are considering,   these quotients
are  quite small when~$p > 2$, since in all cases we can
take~$B = \wGamma(\p)$  with~$\Phi(B) = \wGamma(\p^2)$,
However, when~$p=2$, $B$ has large index in~$\wGamma(\p)$,  and
the Frattini quotient~$B/\Phi(B)$ is also large; this is reflected
by the fact that we need to work harder to pin
down the precise image when~$p=2$.
\end{remark}

 \subsection{The adjoint representation at~$2$}
It will be of particular importance when studying the~$2$-adic image of~$\Delta$ to understand
the basic group theory of these modules, which we discuss now.
If~$n = 2^k m$ with~$m > 1$ odd, it is shown in~\cite[Theorem~1.1]{CM} that the image of~$\Delta$
in~$\SL_2(\OL_K/2)$ is isomorphic to~$D_{n/2}$ if~$k > 0$ and~$D_n$ otherwise.
If~$\p$ is a prime of residue characteristic two, then the image of~$\Delta$ in~$\SL_2(\OL_K/\p)$
is isomorphic to the (typically smaller) group~$D_m$.  (The importance of the image of~$\Delta$ in~$\SL_2(\OL_K/\p)$ is why
we reserve~$m$ for the largest odd divisor of~$n$, in contrast to~\cite{CM} where~$n=m$ or~$n = 2m$
depending on whether~$n$ is odd or not; hopefully this conflict does not cause any confusion.)

Let~$n = 2^k m$,
Let~$\zeta = \zeta_{n}$, and let
$$T = \left( \begin{matrix} 1 & 2 + \zeta + \zeta^{-1}  \\ 0 & 1 \end{matrix} \right),
\quad
U = - \left( \begin{matrix} 1 & 0 \\ -1 & 1 \end{matrix} \right),
\quad
R = UT.$$
The matrix~$R$ has order~$n$ and eigenvalues~$\zeta$ and~$\zeta^{-1}$.

For some purposes, it will
be useful to note that  we can explicitly diagonalize~$R$ by the matrix of eigenvectors:
\begin{equation}
\label{Qmat}
\QQ = \left( \begin{matrix}
-1 - \zeta & -1 - \zeta^{-1} \\
1 & 1 \end{matrix} \right).
\end{equation}
Note that this matrix lies in~$\GL_2(\OL_{\p}(\zeta))$ for any prime~$\p | 2$ only if~$m > 1$ (so that~$1+\zeta$
is a~$2$-adic unit).
In the new basis, we have
\begin{equation} \label{nicebasis}
\begin{aligned}
\QQ^{-1} R \QQ = & \ 
 \left( \begin{matrix} \zeta^{-1} & 0 \\ 0 & \zeta \end{matrix} \right),\\
\QQ^{-1} T \QQ = & \   \frac{1}{1-\zeta} \left( \begin{matrix}  2 & 1 + \zeta \\ -1 - \zeta & - 2 \zeta \end{matrix}\right), \\
\QQ^{-1} U  \QQ= & \ \frac{1}{1-\zeta} \left(\begin{matrix} -2 & -1 - \zeta^{-1} \\
\zeta(1+\zeta) & 2 \zeta \end{matrix}\right). \end{aligned}
\end{equation}
The basis~(\ref{nicebasis}) will occasionally be more convenient
for some computations. Note that it is only defined over the larger field~$L = \Q(\zeta)$
rather than~$K = \Q(\zeta+ \zeta^{-1})$. We only use this basis when~$m > 1$.

Let us now assume that~$m > 1$.
(The case~$n=2^k$ is considered in Section~\ref{poweroftwo}.)
 Let~$D_m$ denote the dihedral group of order~$2m$.
Let~$\F$ be the finite field of characteristic~$2$ given by adjoining~$\zeta + \zeta^{-1}$
to~$\F_2$
for a primitive~$m$th root of unity~$\zeta$. (Note that if~$\zeta$ is a primitive~$2^k m$th root of unity,
then the image of~$\zeta$ in any field of characteristic~$2$ is a primitive~$m$th root of unity.)
There is an isomorphism $\OL/\p = \F$ for any prime $\p | 2$.
The group~$D_m$ admits a~$2$-dimensional representation over~$\F$ given by the image of~$\Delta$
in~$\SL_2(\OL/\p)$ for a prime~$\p | 2$. 
The action of conjugation makes~$M^0(\F)$ a representation for~$D_m$. 
There is a splitting
$$M^0(\F) \simeq \F \oplus Q$$
 of this representation as a~$D_m$ module given explicitly by
\begin{equation}
\label{moduleold}
\left(\begin{matrix} a & 0 \\ 0 & a \end{matrix} \right) \oplus \left(\begin{matrix} b+c & b(\zeta + \zeta^{-1})
 \\ c & b+c \end{matrix} \right) 
\end{equation}

The representation~$Q/\F$ is absolutely irreducible. However, more is true:

\begin{lemma} \label{isirreducible} The representation~$Q$ considered as a representation of~$D_m$ over~$\F_2$
is irreducible.
\end{lemma}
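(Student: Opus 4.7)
The plan is to use Galois descent. Since $Q$ is absolutely irreducible as an $\F$-representation of $D_m$, the scalar extension decomposes as
\[
Q \otimes_{\F_2} \F \;\cong\; \bigoplus_{\sigma \in \Gal(\F/\F_2)} Q^{\sigma},
\]
where $Q^{\sigma}$ is the $\F$-representation obtained by twisting the $\F$-action on $Q$ by $\sigma$, so $\Tr(g \mid Q^{\sigma}) = \sigma(\Tr(g \mid Q))$ for every $g \in D_m$. If the summands are pairwise non-isomorphic, then any $\F_2[D_m]$-submodule $Q' \subseteq Q$, after scalar extension to $\F$, is a sum of $Q^{\sigma}$ over some $\Gal(\F/\F_2)$-stable subset of $\Gal(\F/\F_2)$; since the regular action is transitive, such a subset is empty or everything, giving the required $\F_2$-irreducibility of $Q$.

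The main task is therefore to show that the $Q^{\sigma}$ are pairwise non-isomorphic, and for this it suffices to produce a single element of $D_m$ whose trace on $Q$ generates $\F$ over $\F_2$. I will take $R$. Diagonalising $\bar R$ over $\overline{\F}$ with eigenvalues $\zeta, \zeta^{-1}$, conjugation by $\bar R$ on $M_2$ has eigenvalues $1, 1, \zeta^2, \zeta^{-2}$; intersecting with the trace-zero subspace $M^0$ removes one trivial eigenvalue, and passing to the quotient $Q = M^0/\F \cdot I$ removes another, giving
\[
\Tr(R \mid Q) \;=\; \zeta^2 + \zeta^{-2} \;=\; (\zeta + \zeta^{-1})^2 \;\in\; \F.
\]
Since $\F = \F_2(\zeta + \zeta^{-1})$ and the Frobenius $x \mapsto x^2$ is a field automorphism of $\F$, the element $(\zeta+\zeta^{-1})^2$ also generates $\F$ over $\F_2$. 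Hence no nontrivial $\sigma \in \Gal(\F/\F_2)$ fixes $\Tr(R \mid Q)$, so $Q^{\sigma} \not\cong Q$; replacing $\sigma$ by $\sigma_2^{-1}\sigma_1$ gives $Q^{\sigma_1} \not\cong Q^{\sigma_2}$ whenever $\sigma_1 \neq \sigma_2$, as required.

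The only real obstacle is the eigenvalue computation for the adjoint action, which can be done either directly in the basis~(\ref{nicebasis}) in which $R$ is already diagonal, or via the general identity $\chi_{M_2}(g) = \Tr(g)^2$ for $g \in \SL_2$ (subtracting the trivial contributions from $M_2 \supset M^0 \supset \F\cdot I$). The degenerate case $\F = \F_2$ (forcing $m = 3$) is automatic, since then $Q$ is already absolutely irreducible of $\F_2$-dimension two.
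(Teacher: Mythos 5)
Your proof is correct and takes essentially the same approach as the paper: extend scalars so that $Q \otimes_{\F_2} \F$ splits into Galois conjugates $Q^\sigma$, then reduce irreducibility over $\F_2$ to the pairwise distinctness of the $Q^\sigma$, which both arguments derive from the trace of the order-$m$ element on $Q$ generating $\F$. The only difference is that you carry out the eigenvalue/trace computation $\mathrm{Tr}(R \mid Q) = \zeta^2 + \zeta^{-2} = (\zeta+\zeta^{-1})^2$ explicitly, where the paper simply asserts it.
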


\begin{proof}
Over~$\F_2$, the representation~$Q/\F_2$ has dimension~$2[\F:\F_2]$.
The base change~$Q/\F_2 \otimes_{\F_2} \F$
decomposes as a product of all the~$\Gal(\F/\F_2)$-conjugates of~$Q$ which are distinct
 representations because the trace of an order~$m$ element generates~$\F$. It follows that the only~$\Gal(\F/\F_2)$-invariant subrepresentation is the
 entire space, and thus~$Q/\F_2$ is irreducible.
 \end{proof}
 
 Now suppose that~$P = \prod_{\p | 2} \p$. The image of~$\Delta$ in~$\SL_2(\OL/P)$ is once
 again isomorphic to~$D_m$.  If~$r$ is any integer, there is an isomorphism
 $$\Gamma(P^r)/\Gamma(P^{r+1}) \simeq  \bigoplus_{\p|2} \Gamma(P^r)/\Gamma(P^r \p) \simeq \bigoplus_{\p|2} M^0(\F)
 \simeq \bigoplus_{\p|2} (\F \oplus Q).$$
 Our goal will be to study the filtration~$\Delta(P^r)/\Delta(P^{r+1})$ by showing
 that the image in~$\Gamma(P^r)/\Gamma(P^{r+1})$ contains the~$Q$-isotypic component of~$\Gamma(P^r)/\Gamma(P^{r+1})$
 for all~$r \ge e$. 
 
 It will also be useful to consider~$\Gamma(2)/\Gamma(4)$ as a module for the image~$D_{n/2}$ 
 (or~$D_n$ if~$n$ is odd)
 inside~$\SL_2(\OL/2)$. There certainly exists a splitting:
 $$\Gamma(2)/\Gamma(4) \simeq I + 2 M^0(\OL/2) \simeq M^0(\OL/2)  \simeq (\OL/2) \oplus \wQ,$$
 where~$\OL/2$ is the subgroup consisting of scalar matrices, and~$\wQ \simeq (\OL/2)^2$ is the group given by
 matrices of the form
 \begin{equation} \label{moduleoldq}
 \left(\begin{matrix} b+c & b(\zeta + \zeta^{-1})
 \\ c & b+c \end{matrix} \right) 
\end{equation}
just as in the decomposition~(\ref{moduleold}) above (remember we are assuming
that~$m > 1$ and so~$\zeta + \zeta^{-1}$ is a $2$-adic  unit). All we shall use about~$\wQ$ is that it is stable under conjugation
by the dihedral group.

\subsection{Roots of unity}

We have the following elementary construction involving roots of unity.

\begin{lemma} \label{subspacenew}
Let~$\zeta$ be a root of unity of order~$2^k m$
with~$m > 1$ odd. Let~$r \ge 0$ be an integer such that~$2r+1 < e$, where~$e$
is the ramification index of~$2$ in~$L$.
Let~$\xi = \zeta^m = \zeta_{2^k}$. 
Let~$\OL_L = \Z[\zeta]$, so~$(1 - \xi) \OL_L = \prod \P_i$, where~$\P_i$ are the primes
above~$2$ in~$\OL$.
The~$\Z$-span of the elements
$$\zeta^i - \zeta^j = \zeta^i + \zeta^j \mod 2$$
where~$i$ and~$j$ are odd contains an element of~$\OL_L/2$ with valuation exactly~$2r$
at~$\P$ and valuation greater than~$2r$ at all other primes above~$2$.
The~$\Z$-span of the elements
$$\zeta^i(\xi^j - 1) \mod 2$$
where~$i$ and~$j$ are odd contains an element of~$\OL_L/2$ with valuation~$2r+1$
at~$\P$ and valuation greater than~$2r+1$ at all other primes above~$2$.
\end{lemma}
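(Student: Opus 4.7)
The plan is to identify $\OL_L/2$ as a product of truncated polynomial rings with a common uniformizer and reduce the lemma to a linear-algebra problem over $\F_2$. Since $L/\Q(\xi)$ is unramified at $2$ and $2$ equals a unit times $(1-\xi)^e$ for $e = \varphi(2^k)$, CRT yields
$$\OL_L/2 \;\simeq\; \prod_{i=1}^{r} \F[y]/y^e,$$
where $y := 1-\xi$ and $\F$ is the common residue field of the primes $\P_i$. Writing $\zeta = \xi\eta$ with $\eta = \zeta^{2^k}$ a primitive $m$th root of unity, the reduction $\zeta^i \bmod 2$ has $i'$th component $\eta_{i'}^i(1+y)^i$, where $(\eta_1,\ldots,\eta_r) \in \F^r$ is the image of $\eta$. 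Valuations $v_{\P_i}$ correspond to $y$-adic valuations in the $i$th factor.

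The key step is a Galois-isolation principle: since $\Z[\eta]/2 \simeq \F^r$ is generated as an $\F_2$-algebra by $\eta$, the $\F_2$-span of $\{\eta^i : i \in \Z/m\Z\}$ equals all of $\F^r$. Combined with the CRT freedom to vary $i \bmod m$ independently of $i \bmod 2^k$, this allows us, for each fixed polynomial factor appearing in a generator, to realize any desired $\F^r$-scaling via $\F_2$-linear combinations of the generators---in particular to isolate the nonzero contribution to any single prime $\P$. The lemma thereby reduces to computing the subspace of $\F[y]/y^e$ spanned by the polynomial factors.

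For the second statement, the polynomial factor of $\zeta^i(\xi^j - 1)\bmod 2$ is $P_{a,b}(y) := (1+y)^a + (1+y)^b$ with $b$ odd and $a = b + j$ even modulo $e$. Every $P_{a,b}$ vanishes at $y = 0$; using that $\{(1+y)^a : a \text{ even}\}$ spans $\F_2[y^2]/y^e$, that $\{(1+y)^b : b \text{ odd}\}$ spans $(1+y)\F_2[y^2]/y^e$ (these two subspaces intersecting trivially when $e$ is even), and that the pairing contributes one scalar constraint, a dimension count shows the $\F_2$-span of $\{P_{a,b}\}$ is precisely $y \cdot \F_2[y]/y^e$. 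Since $2r+1 < e$, the element $y^{2r+1}$ lies in this span, yielding the tuple $(y^{2r+1},0,\ldots,0)$. For the first statement, the two monomials $(1+y)^i$ and $(1+y)^j$ in $\zeta^i+\zeta^j \bmod 2$ vary independently; an analogous argument---additionally using the relation $\sum_{a=0}^{m-1}\eta^a = 0$, which has an odd number of terms and absorbs the single coefficient-sum constraint---shows that each $i'$th component may be chosen independently in the $\F$-subspace $(1+y)\F[y^2]/y^e$. The element $(1+y)y^{2r} = y^{2r}+y^{2r+1}$ lies in this subspace and has $y$-valuation exactly $2r$ (since $2r+1 < e$), so the tuple concentrated at $\P$ has the required profile.

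The main obstacle is to show that the pairing structure of the generator set does not impose spurious constraints on the $\F_2$-span of the polynomial factors; the argument for the first statement is the more delicate one, and it is here that the odd-length relation among the $m$th roots of unity (requiring $m > 1$) is essential.
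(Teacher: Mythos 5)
Your proof is correct in outline and takes a genuinely different route from the paper's. The paper argues more economically: it first reduces the second family to the first by multiplying by $\xi-1$, then reduces to the case $r=0$ by an induction that multiplies by $\xi^2-1 = \zeta^{2m}-1$ (which preserves the form ``even-length sum of roots of unity to odd exponents'' while raising all valuations by exactly $2$), and finally handles $r=0$ by choosing an $\eta$ coprime only to $\P$, multiplying by the unit $\zeta-1$, and shifting even exponents $i \mapsto i+m$ modulo $(1-\xi)$. Your argument instead identifies $\OL_L/2 \simeq \prod_{\p|2} \F[y]/y^e$ via CRT, notices that each generator factors as (image of an element of $\Z[\eta]/2 \simeq \F^r$) $\times$ (a polynomial in $y$), and uses two observations --- that the $\F_2$-span of $\{\eta^c\}$ is all of $\F^r$ so one can isolate a single prime, and a dimension count identifying the $\F_2$-span of the polynomial factors --- to produce the required element directly. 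Both approaches are valid; the paper's is shorter, while yours is more explicit and essentially computes the full span of each family of generators rather than producing a single element.

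Two small imprecisions worth tightening. First, $\zeta = \xi\eta$ is not generally true; one should write $\zeta = \xi^{u}\eta^{v}$ with $um \equiv 1 \bmod 2^k$ and $v2^k \equiv 1 \bmod m$, so that $\zeta^i$ has $\ell$th component $\bar\eta_\ell^{\,vi}(1+y)^{ui}$ rather than $\bar\eta_\ell^{\,i}(1+y)^i$; since $u$ and $v$ are units modulo $2^k$ and $m$ respectively (and $u$ is odd), this changes nothing in the argument but is needed for the CRT freedom you invoke. Second, the sentence handling the first family is compressed to the point of opacity: the cleanest version of what you are saying is that the $\F_2$-span of $\{\zeta^i - \zeta^j : i,j \text{ odd}\}$ equals the $\F_2$-span of $\{\zeta^i : i \text{ odd}\}$ (with no codimension-one defect), because any putative ``coefficient-sum'' functional would have to vanish on the odd-length relation $\sum_{a=0}^{m-1} \zeta^{1+2^k a} = 0$; equivalently, $\zeta^2-1$ is a unit since $n/2 = 2^{k-1}m$ is not a prime power (here $k \ge 2$ is forced by $2r+1 < e$). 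Once that is in place, $\zeta^i = \bar\eta_\ell^{\,vi}(1+y)^{ui}$ is again of product form and the Galois-isolation step applies verbatim.
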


\begin{proof}
Since~$\xi-1$ has valuation~$1$ at all primes above $2$, the second claim follows from the first (even taking~$j = 1$).
If~$\alpha \in \OL_L$ is expressible as a sum of an even number of roots of unity
to odd exponent, then so is
$$\alpha (\xi^2 - 1)  = \alpha \zeta^{2m} - \alpha,$$
and the valuation of~$\alpha (\xi^2 - 1) = \alpha (\xi - 1)^2 \bmod 2$ at any~$\P$ is exactly $2$
plus the valuation of~$\alpha$. Hence, by induction,  it suffices to consider the case when~$r = 0$.
Certainly there exists an~$\eta \in \OL$ which is divisible by all primes above~$2$ with the exception of
a chosen prime~$\P$. 
Multiplying by the unit~$(\zeta - 1)$ (which is a unit since~$m > 1$), we can assume that~$\eta$ is expressible
as a sum of an even number of roots of unity. Replacing any even odd exponent~$\zeta$
by~$\zeta^{i+m}$ keeps~$\eta$ unchanged modulo~$(1-\xi)\OL_L = \prod \P_i$, which preserves the property
of being co-prime to~$\P$ but not having positive valuation at every other prime above~$2$, and hence we have constructed
 the desired element.
\end{proof}

\section{The~$2$-adic image}
\label{2adicfirst}

We now consider the~$2$-adic image of~$\Delta$ when~$n = 2^k m$ and~$m > 1$ is odd, deferring the case of~$n = 2^k$
to Section~\ref{poweroftwo}.
To prove Theorem~\ref{2part}, we both need to show that~$\wDelta_2$ is both contained
and equal to what is described there.  We begin by proving the desired containment.
 We already know the image of~$\Delta$ in~$\SL_2(\OL/2)$
from~\cite[Thm~1.1]{CM}, so we only need consider the image of~$\Delta(2)$ in~$\Gamma(2)/\Gamma(4)$.
The statement to be proved is that the image of~$\Gamma(2)$ is contained within the subgroup
$$\F_2 \oplus \wQ \subset \OL/2 \oplus \wQ \simeq \Gamma(2)/\Gamma(4),$$
where the first~$\F_2$ is generated by the image of the matrices~$\pm I$.
(As noted in~\cite[\S1.V]{CM}, the choice of signs in the definition of~$\Delta$
ensures that~$-I \in \Delta$.)
Because~$\Delta(2)$ is normal in~$\Delta$, and since this subspace is invariant under conjugation by~$\Delta/\Delta(2)$,
it suffices to show that a set of \emph{normal} generators for~$\Delta(2)$ maps to this space.
But we know that, with~$d = n/2$ if~$n$ is even and~$n$ otherwise, that
$$\Delta/\Delta(2) \simeq \langle T,U | T^2, U^2, R^{d}, R =  UT \rangle.$$
So it suffices to check the claim for~$T^2$, $U^2$, and~$R^{d}$. The image of the latter is given by~$-I$.
On the other hand, we find that (modulo~$4$):
$$U^2 = \left( \begin{matrix} 1 & 0 \\ 2 & 1 \end{matrix} \right)
\equiv 1 + 2 \left(\begin{matrix} a & 0 \\ 0 & a \end{matrix} \right) + 2  \left(\begin{matrix} b+c & b(\zeta + \zeta^{-1})
 \\ c & b+c \end{matrix} \right)  \bmod 4 $$
 with~$(a,b,c) = (1,0,1)$,  whereas
 $$T^2 = \left( \begin{matrix} 1 &  4 + 2(\zeta+ \zeta^{-1}) \\ 0 & 1 \end{matrix} \right)
\equiv 1 + 2 \left(\begin{matrix} a & 0 \\ 0 & a \end{matrix} \right)+ 2  \left(\begin{matrix} b+c & b(\zeta + \zeta^{-1})
 \\ c & b+c \end{matrix} \right)   \bmod 4$$
 with~$(a,b,c) = (1,1,0)$.
 
We now turn to showing that~$\wDelta_2$ is as big as possible.
We reduce it to the following:
\begin{lemma} \label{satisfied}
Let~$e$ be the ramification degree of~$2$ in~$K$, and let~$P = \prod_{\p|2} \p$.
Suppose there are~$d$ primes dividing~$2$ in~$\OL_K$.
Suppose that the image of~$\Delta(P^r)/\Delta(P^{r+1})$ inside
 $$\Gamma(P^r)/\Gamma(P^{r+1}) \simeq  \bigoplus_{\p|2} \Gamma(P^r)/\Gamma(P^r \p) \simeq \bigoplus_{\p|2} M^0(\F)
 \simeq \bigoplus_{\p|2} (\F \oplus Q)$$
 contains all~$d$ copies of~$Q$ for every~$r=e, e+1, \ldots, 2e-1$.
Then Theorem~\ref{2part} holds.
 \end{lemma}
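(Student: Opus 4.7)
The plan is to deduce the three parts of Theorem~\ref{2part} from the hypothesis: part~(3) is a direct citation, part~(2) follows by assembling graded pieces, and part~(1) requires an inductive propagation up the $2$-adic filtration using squaring and commutators.

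Part~(3) is \cite[Thm~1.1]{CM}. For part~(2), the containment of the image of $\Delta(2)/\Delta(4)$ in $\F_2 \oplus \wQ$ was already proved in the paragraphs preceding the lemma, with the $\F_2$ supplied by $\pm I \in \Delta$. For the reverse inclusion I would observe that $\Gamma(2)/\Gamma(4) = \Gamma(P^e)/\Gamma(P^{2e})$ carries the natural filtration by the subquotients $\Gamma(P^r)/\Gamma(P^{r+1})$ for $r = e, e+1, \ldots, 2e-1$, under which $\wQ$ is compatibly filtered so that its $r$th graded piece is exactly the $Q$-isotypic component $\bigoplus_{\p \mid 2} Q$. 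The hypothesis says precisely that $\Delta(P^r)/\Delta(P^{r+1})$ surjects onto this piece at each such $r$, and assembling across $r$ (by a routine descending induction over the filtration, using that $\Gamma(2)/\Gamma(4)$ is abelian) shows that the image of $\wDelta(2)/\wDelta(4)$ contains all of $\wQ$. Combined with $\pm I$ this yields the full image $\F_2 \oplus \wQ$, the index $|\OL/2|/2$, and the trivial scalar intersection.

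For part~(1), since $\wDelta(4)_2$ is a closed subgroup of the profinite group $\wGamma(4)_2$, equality reduces by a standard successive-approximation argument to showing that for every $r \ge 2e$ the image of $\wDelta(P^r)$ in the graded piece $\Gamma(P^r)/\Gamma(P^{r+1}) \simeq \bigoplus_{\p \mid 2}(\F \oplus Q)$ is surjective. I would propagate the available $Q$-components (at depths $e, \ldots, 2e-1$) via two basic operations, applied prime by prime. Writing $2 = u\pi^e$ with $u$ a local unit, for $x = I + \pi^s A + O(\pi^{s+1}) \in \wDelta$ with $A \in Q$, a brief computation yields $x^2 \equiv I + u\pi^{s+e} A + \pi^{2s} A^2$ modulo higher-order terms; in characteristic~$2$ the off-diagonal entries of $A^2$ vanish, so $A^2$ lies in the scalar $\F$-part of $M^0(\F)$. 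Iterating squaring from the starting depths therefore yields $Q$-components at every depth $\ge e$. The $\F$-components at each depth $r \ge 2e$ are supplied by the commutator bracket $[Q_s, Q_t] \subseteq \F_{s+t}$: a short computation shows $[A_1, A_2] = (\zeta + \zeta^{-1})(b_1 c_2 + b_2 c_1)\,I$, which spans $\F$ as $A_1, A_2$ vary, since $\zeta + \zeta^{-1} \ne 0$ in $\F$ (using $m \ge 3$). Every $r \ge 2e$ admits a decomposition $r = s+t$ with $s, t \ge e$, so we obtain the $\F$-part at every such depth, and together with the $Q$-part we fill $\bigoplus_{\p \mid 2}(\F \oplus Q)$.

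The main obstacle is the characteristic-$2$ verification that $A^2$ is scalar and that $[Q, Q]$ is a nonzero map into the scalar part of $M^0(\F)$: both are routine matrix identities, but they are the essential inputs since in characteristic~$2$ the bracket of a scalar with anything vanishes, so every $\F$-contribution must originate either as an $A^2$ term or as a $[Q,Q]$ commutator. Once these identities are verified the induction on $r$ closes, and together with parts~(2) and~(3) this yields the full Theorem~\ref{2part}, including the stated $2$-adic index $\bigl(|\SL_2(\OL/2)|/(2n')\bigr) \cdot \bigl(|\OL/2|/2\bigr)$.
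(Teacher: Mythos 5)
Your proposal is correct and follows essentially the same strategy as the paper: Burnside/Frattini-type bootstrapping up the $2$-adic filtration, with the two characteristic-$2$ facts (squares of traceless matrices are scalar, commutators of traceless matrices are scalar, and the commutator pairing surjects onto scalars) doing the real work. The only organizational difference is that you index by the $P$-adic filtration, one step at a time, whereas the paper works with the coarser $2$-adic filtration $\Delta(2^r)/\Delta(2^{r+1}) \simeq \OL/2 \oplus \wQ$ and jumps $e$ steps at a time, always commutating against a level-one element $I+2B$ with $B$ in $\wQ$; your version instead commutates $[Q_s,Q_t]$ with $s+t=r$ and thus has a slightly more intricate bootstrap at depth $2e$ (you need $\F$ at depth $2e$ from $[Q_e,Q_e]$ before you can cleanly extract $Q$ at depth $2e$ from squaring, because of the $A^2$ correction), but you correctly identify and resolve this. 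The key identities are the same in both: $AB-BA = (bf+ce)I$ for traceless $A,B$ in characteristic $2$, and $A^2 = (\det A)I$ by Cayley--Hamilton with $\mathrm{tr}\,A = 0$. One small caution on phrasing: ``applied prime by prime'' could mislead, since $\wDelta_2$ is not a product over $\p \mid 2$; what saves you is that the hypothesis already gives the full $\bigoplus_{\p\mid 2} Q$ at each starting depth, so the coordinate-wise operations do reach every target vector.
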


\begin{proof}
Since we have already seen that the image of~$\Delta(2)/\Delta(4)$ in~$\OL/2 \oplus \wQ$ lands
in~$\F_2 \oplus \wQ$ and contains the copy of~$\F_2$ generated by~$-I$, this certainly shows that the image is the entire
space~$\F_2 \oplus \wQ$. Thus it suffices to prove that~$\wDelta_2(4) = \wGamma_2(4)$, or that
the image of~$\Delta(2^r)$ surjects onto~$\Gamma(2^r)/\Gamma(2^{r+1})$ for every~$r \ge 2$.
We prove this by induction. 
 Note that
$$\Delta(2^r)/\Delta(2^{r+1}) \simeq \OL/2 \oplus \wQ.$$
Suppose either that~$r \ge 2$ and that the image  is surjective, or that~$r=1$ and (by assumption)
the image contains~$\wQ$. In both cases, the image
 certainly contains~$\wQ$.
Hence for  any  pair of elements in~$M_0(\OL/2)$, there
exist elements
$$I +  2^r A \bmod 2^{r+1}, \quad 1 + 2 B \bmod 2^2$$
in the image of~$\Delta$ where~$A$ and~$B$ realize the given elements up to a scalar matrix. 
Hence the image of~$\Delta$ contains the commutator
$$[I + 2^r A \bmod 2^{r+1},I + 2 B \bmod 4] = I + 2^{r+1} (AB - BA) \bmod 2^{r+2}$$
Now replacing~$A$ and~$B$ by~$A + \lambda I$ and~$B + \mu I$ does't change~$AB-BA$.
On the other hand, the image of 
the map
$$M^0(\OL/2)^2 \rightarrow M^0(\OL/2)$$
given by~$(A,B) \rightarrow AB-BA$ is the full space~$\OL/2$ of scalar matrices.
Hence the image of~$\Delta(2^r)/\Delta(2^{r+1})$ certainly contains this subgroup~$\OL/2$ for~$r \ge 2$.
With~$A$ as above, we deduce that~$\Delta(2^r)/\Delta(2^{r+1})$
also contains the image of
$$(I + 2^r A)^2 = I + 2^{r+1} A + 2^{2r} A^2 \mod 2^{r+2}.$$
If~$r > 1$, this is~$I + 2^{r+1} A \bmod 2^{r+2}$, 
and since we can choose~$A$ to be any element in~$M^0(\OL/2)$ up to scalars, and we have also showed that the image contains
all the scalars, we are done by induction. In the case when~$r = 1$, this is~$I + 4(A + A^2) \bmod 8$.
But now since~$A \in M^0(\OL/2)$ has trace zero it follows that~$A^2$ is scalar, so just as above
 we can realize~$I + 4A \mod 8$ in the image of~$\Delta(4)/\Delta(8)$ 
for any~$A \in M^0(\OL/2)$, and we are done.
\end{proof}

\begin{proof}[Proof of Theorem~\ref{2part}]
To prove Theorem~\ref{2part}, it suffices to show that the conditions of Lemma~\ref{satisfied} are satisfied.
 For that, we simply write down some explicit elements
first in~$\Delta(2) = \Delta(P^e)$ and then in~$\Delta(P^r)$ for~$r \in [e,\ldots,2e-1]$ and compute their images
in~$\Delta(P^r)/\Delta(P^{r+1})$. 
We have, for example, the elements
$$X_{i,j}:=(R^i T)^2 (R^j T)^{-2}
= I + \frac{2}{\zeta - 1} \QQ \left( \begin{matrix} 0 & \zeta(\zeta^{-2i-1} - \zeta^{-2j-1}) \\
 \zeta^{2i+1} - \zeta^{2j+1}  & 0\end{matrix} \right) \QQ^{-1} \mod 4,$$
 where~$\QQ$ is the matrix~(\ref{Qmat}) which we remind the reader 
 is~$2$-adically invertible  since the determinant~$\zeta - \zeta^{-1}$ is a unit when~$m > 1$.

Let us define
\begin{equation}
\label{defVi}
V_{i}:=R^{mi/2+ 1/2}T^2 R^{-mi/2 - 1/2} U^{-2} \in \Delta
\end{equation}
for any odd integer~$i$ (since~$m$ is odd, this guarantees that the exponent of~$R$
is an integer) and then we find:
\begin{equation}
\label{defW}
W_{i,j}:=R^{(j-1)/2} V_i R^{-(j-1)/2}
\equiv
I +  2  
\QQ \left( \begin{matrix}
0 &
 \zeta^{-j}(\xi^{-i} - 1) \\
 \zeta^{j} (\xi^i - 1) &
0 \end{matrix} \right) \QQ^{-1} \bmod 4
\end{equation}
now with~$i$ and~$j$ both odd.
For both~$X_{i,j}$ and $W_{i,j}$, note that the off-diagonal matrices are swapped
by the automorphism~$c$ sending~$\zeta$ to~$\zeta^{-1}$.
If~$M_i \in M_2(\OL_L)$ 
is any finite set of matrices, there is the elementary congruence:
\begin{equation}
\label{elementary}
\prod (I + 2 M_i) = I + 2 \sum M_i \bmod 4.
\end{equation}
It follows that, by either taking products of~$X_{i,j}$ or~$W_{i,j}$,
we can find elements in~$\Delta$ of the form
$$A = I + \QQ B \QQ^{-1} \bmod 4$$
where~$B$ is zero on the diagonal and on the lower left hand corner is given (up to a unit scalar)
either by a sum of
$$\zeta^i - \zeta^j$$
where~$i$ and~$j$ are both odd, or
$$\zeta^j (\xi^i - 1)$$
with~$i$ and~$j$ both odd.

Now let~$\p | 2$ in~$\OL_K$, and let~$\P$ be a prime in~$\OL_L$ above~$\p$.
Note that there are at most two primes above~$\p$ in~$\OL_L$, and the other has
the form~$c \P$ where~$c$ is given by complex conjugation.
By Lemma~\ref{subspacenew}, we may, depending on the parity of~$r$,
find an~$B$ such that the lower left corner has any given valuation at~$\P$ and higher valuation at all other primes.
Moreover, since the upper right corner is obtained by acting by complex conjugation, it has the same valuation
at~$c \P$ and higher valuation at all other primes. Reverting back to the basis over~$\OL_K$ 
and to the matrix~$A$, 
the fact that~$E$ is~$2$-adically invertible means that since all the entries in the original matrix were
trivial modulo~$P^r \p'$ for every prime~$\p'$ except~$\p$ means 
 that~$A \in \Gamma(P^r \cdot P/\p)$ and moreover the image
of~$A$ in~$\Gamma(P^{r})/\Gamma(P^{r+1})$  is non-zero. Moreover,
it is non-scalar as well, since otherwise (conjugating again by~$E$) it would still be scalar,
and by construction it is not of this form.
 It follows that its image
in this quotient lies in precisely one copy
of~$Q$ associated to the factor~$\Gamma(P^r)/\Gamma(P^r \p)$. 
Hence, because~$Q$ is irreducible over~$\F_2$, we deduce that the image of~$\Gamma(P^r)$
contains all~$d$ copies of~$Q$, as desired.
\end{proof}

\section{The adelic image}

We now turn to computing the adelic image away from the prime~$p=2$. In this section,
we now allow~$n = 2^k$.
The groups~$\SL_2(\F)$ have no normal subgroups except for the center when~$|\F| \ge 5$.
Moreover~$\SL_2(\F)$ and~$\SL_2(\F')$
have no isomorphic non-trivial quotients when they are distinct fields of non-equal
characteristic.

\begin{lemma} \label{dickson}
Let~$p$ be odd.
Suppose that~$\F_p[\zeta+\zeta^{-1}] = \F$. Then~$U$ and~$T$ generate~$\SL_2(\F)$
when~$\F$ has odd characteristic with the following exceptions:
\begin{enumerate}
\item $n = 5 \cdot 3^k$ for some~$k \ge 0$ and~$p = 3$, in which case  the image inside~$\SL_2(\F_9)$ for a prime above~$3$ is isomorphic
to the degree~$2$ central extension of~$A_5 \subset A_6 \simeq \PSL_2(\F_9)$. 
\item $n = 2p^k$ for some~$k \ge 1$, in which case the image is cyclic of order~$p$ generated by the image of~$U$.
\end{enumerate}
Moreover, $\PSL_2(\F)$ is simple unless~$n \in \{3^k, 2 \cdot 3^k, 4 \cdot 3^k\}$,
in which case the image is cyclic of order~$3$ or~$\SL_2(\F_3)$ depending on whether~$n$
is twice a prime power or not.
\end{lemma}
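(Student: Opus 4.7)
The plan is to reduce modulo $p$ and apply Dickson's classification of finite subgroups of $\PSL_2(\overline{\F}_p)$. Writing $n = p^k n'$ with $\gcd(n', p) = 1$, the reduction of $\zeta$ in any characteristic $p$ residue field is a primitive $n'$-th root of unity, so $\F = \F_p[\zeta + \zeta^{-1}]$ depends only on $n'$. Three observations organize the argument: (i) the image of $R$ has eigenvalues the reductions of $\zeta^{\pm 1}$ and is therefore semisimple of order $n'$; (ii) the image of $T$ is unipotent with upper-right entry $(1+\zeta)(1+\zeta^{-1})$, which is nonzero modulo $p$ exactly when $n' \neq 2$; and (iii) $U^p = -I$ in any odd characteristic, so $-I$ lies in $\langle U, T\rangle$ unconditionally. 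Thanks to (iii), showing $\langle U, T\rangle = \SL_2(\F)$ reduces to showing $\langle \bar U, \bar T\rangle = \PSL_2(\F)$.

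I would dispose first of the two exceptions. When $n = 2p^k$ we have $n' = 2$, so (ii) gives $\bar T = I$ and the image collapses to $\langle \bar U\rangle$, a cyclic subgroup of $\SL_2(\F_p)$ of order $2p$ whose $\PSL_2(\F_p)$-image is the claimed $\Z/p$. When $n = 5 \cdot 3^k$ and $p = 3$, we have $n' = 5$ and $x^2 + x - 1$ is irreducible modulo $3$, so $\F = \F_9$ and $\PSL_2(\F_9) \simeq A_6$; the image then contains elements of orders $3, 3, 5$, and an inspection of the subgroup lattice of $A_6$ leaves only $A_5$ or $A_6$. I would rule out $A_6$ by an explicit computation in $\SL_2(\F_9)$, comparing the trace of a short word such as $\bar U \bar T$ or $[\bar U, \bar T]$ against the trace spectrum of $A_5 \subset A_6$.

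For the remaining $n$, both $\bar T$ and $\bar U$ are nontrivial unipotents and $\bar R$ is semisimple of order $n' \geq 3$. The fixed points of $\bar R$ on $\PP^1(\overline{\F}_p)$ are the columns of the matrix $\QQ$ from~(\ref{Qmat}), namely $[-1-\zeta^{\pm 1} : 1]$; since $n' \neq 2$, neither is $\infty$ (the fixed point of $\bar T$), so $\bar R$ and $\bar T$ share no fixed point and hence do not lie in a common Borel. Dickson's classification in odd characteristic then leaves only $A_4$, $S_4$, $A_5$, or $\PSL_2(\F')$ and $\mathrm{PGL}_2(\F')$ for a subfield $\F' \subseteq \F$, cyclic and dihedral subgroups being excluded by the presence of a nontrivial unipotent. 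The exceptional groups $A_4, S_4, A_5$ have order at most $60$ and contain no element of order $p$ for $p \geq 7$, so they can intervene only for $p \in \{3, 5\}$ and $n' \leq 5$; these finitely many cases are dispatched by direct computation. Once they are excluded, $\mathrm{tr}(\bar R) = \bar\zeta + \bar\zeta^{-1}$ generates $\F$ over $\F_p$, forcing $\F' = \F$, and $\det \bar U = \det \bar T = 1$ selects $\PSL_2(\F)$ over $\mathrm{PGL}_2(\F)$.

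The simplicity clause is the classical fact that $\PSL_2(\F_q)$ is simple exactly for $q \geq 4$: in odd characteristic this fails only when $\F = \F_3$, which at $p = 3$ forces $\zeta + \zeta^{-1} \in \F_3$ and hence $n' \in \{1, 2, 4\}$, i.e., $n \in \{3^k,\, 2 \cdot 3^k,\, 4 \cdot 3^k\}$. The images are then read off from the preceding analysis: cyclic of order $3$ in the ``twice a prime power'' case $n = 2 \cdot 3^k$, and the full $\SL_2(\F_3)$ for the other two by an explicit check. I expect the main obstacle to be the $n = 5 \cdot 3^k$ exception: it is the unique step where the image must be bounded \emph{from above} (ruling out $A_6$) rather than merely generated from below, and everywhere else the combination of Dickson with the trace-generates-$\F$ observation settles matters immediately.
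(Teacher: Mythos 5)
Your proposal is correct in substance but takes a genuinely different (and somewhat heavier) route than the paper, and it has one small gap that should be flagged.

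The paper's proof observes that $\bar T$ and $\bar U$ are two non-commuting nontrivial unipotents (once the $n'=2$ degeneracy is set aside), and then invokes a single clean theorem of Dickson: two such unipotents generate all of $\SL_2(\F)$ except when $|\F|=9$. The $|\F|=9$ exception is then dispatched by an explicit calculation: the image fails to be everything precisely when $\eta=2+\zeta+\zeta^{-1}$ satisfies $\eta^2+1=0$, which one rewrites as $\zeta^5=1$, yielding exactly the $n=5\cdot 3^k$, $p=3$ case. Your argument instead marshals the full Dickson classification of finite subgroups of $\PSL_2(\overline{\F}_p)$ (cyclic, dihedral, $A_4$, $S_4$, $A_5$, $\PSL_2(\F')$, $\mathrm{PGL}_2(\F')$), using $\bar R$ semisimple and $\bar T$ unipotent with disjoint fixed loci to rule out Borel, cyclic, and dihedral; bounding $A_4$, $S_4$, $A_5$ via their $p$-torsion; and pinning the subfield $\F'$ via the trace of $\bar R$. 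This works, and the ``trace generates $\F$'' step is an insight the paper only uses elsewhere (Lemma~\ref{isirreducible} and~\ref{unramp}). For the $\F_9$ exception you propose a subgroup-lattice/trace argument in $A_6$ rather than the paper's $\eta^2+1=0$ computation; that is plausible, though it requires carrying out the inspection to confirm the image is $A_5$ and not $A_6$.

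There is one gap: your observation~(i) that $\bar R$ is semisimple of order $n'$ fails when $n'=1$ (i.e.\ $n=p^k$). Then $\zeta$ reduces to $1$, $\bar R$ has repeated eigenvalue $1$ and is unipotent, not semisimple, so the ``$\bar R$ and $\bar T$ share no fixed point, hence lie in no common Borel'' clause needs rephrasing. The conclusion still holds --- one checks directly that $\bar R$ fixes $[-2:1]$ rather than $\infty$, or better, falls back (as the paper does) on $\bar T$, $\bar U$ being non-commuting unipotents with fixed points $\infty$ and $0$ respectively --- but as written the ``remaining $n$'' branch of your argument does not cover $n'=1$. Also, a small point on the $n=2p^k$ exception: you correctly note that the image of $U$ in $\SL_2(\F_p)$ has order $2p$ (since $U^p=-I$), so the ``cyclic of order $p$'' in the statement should be read as the image in $\PSL_2(\F_p)$; your parenthetical remark to this effect is a useful clarification.
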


\begin{proof}
Note that the image of~$U$ and~$T$ are both unipotent and non-trivial
unless~$2 + \zeta + \zeta^{-1} \equiv 0 \bmod \p$. The latter implies that~$\zeta \equiv -1 \bmod \p$,
and thus that~$\zeta$ has order exactly~$2$ in~$\F$. But if~$n = m \cdot p^k$ where~$(m,p)=1$,
then~$\zeta$ has exact order~$m$, so~$m=2$. Hence we may assume that~$2+\zeta+\zeta^{-1}$
is non-zero and generates~$\F$.

Any two such unipotent elements generate~$\SL_2(\F)$ by a theorem of 
Dickson~\cite[Thm 8.4, p.44]{Gorenstein} 
unless~$|\F| = 9$, in which case one can compute directly that the same claim holds unless~$\eta =2 + \zeta + \zeta^{-1}$ satisfies~$\eta^2 + 1 = 0$, in which case the
image has order~$120$ and maps to~$A_5 \subset A_6 = \PSL_2(\F_9)$.
The condition that~$\eta^2+1=0$ is equivalent to
$$0 = \zeta^2 + \zeta + 1 + \zeta^{-1} + \zeta^{-2},$$
or equivalently~$\zeta^5-1=0$. This  happens if and only if~$\zeta$ has exact order~$5$ in characteristic~$3$,
which implies the first result.

The order of the residue field at~$3$ is, writing~$n = m \cdot 3^k$ with~$(3,m)=1$,
the order of~$3$ in~$(\Z/m\Z)^{\times}/\pm 1$. This has trivial order if and only if~$3 \equiv \pm 1 \bmod m$,
or when~$m \in \{1,2,4\}$.
\end{proof}

We now prove the following:

\begin{lemma} \label{pbyp}
The closure of the adelic image of~$\Delta$ is the product of the closures
of the images in~$\SL_2(\OL \otimes \Z_p)$ for all~$p$.
Moreover, if~$n \ne 2 p^k$ for some~$k \ge 1$, then the image in~$\SL_2(\OL \otimes \Z_p)$
for~$p$ odd
is the direct product of the images inside~$\SL_2(\OL_{\p})$ for all~$\p | p$.
\end{lemma}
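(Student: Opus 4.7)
\noindent The proof rests on two applications of Goursat's lemma for profinite groups. Recall that a closed subgroup $H$ of a product $G_1 \times G_2$ of profinite groups which surjects onto each factor equals the full product whenever $G_1$ and $G_2$ admit no common non-trivial finite quotient; by induction (equivalently, by working at finite levels), the same conclusion extends to products over any family of factors.

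\emph{First statement.} It suffices to show that for every pair of distinct primes $p \ne q$ the profinite groups $\wDelta_p$ and $\wDelta_q$ admit no non-trivial common finite quotient. Every finite quotient of $\wDelta_p$ is an extension of a quotient of its mod-$p$ image by a $p$-group. By Theorem~\ref{2part} and Lemma~\ref{dickson}, that mod-$p$ image is either the dihedral group $D_{n'}$ (when $p=2$), $\SL_2(\F)$, or, exceptionally, $2.A_5$. Direct inspection of the normal subgroup lattices shows that $\SL_2(\F)$ in odd characteristic has no $\Z/2$-quotient, while every proper quotient of $D_{n'}$ is dihedral or $\Z/2$-type. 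The only coincidence among non-abelian simple quotients across different residue characteristics is $\PSL_2(\F_4) \cong \PSL_2(\F_5) \cong A_5$, which would require characteristic~$2$, where the actual mod-$2$ image is $D_{n'}$ and has no $A_5$ quotient. Combined with the distinctness of the cyclic $p$-group composition factors at different pro-$p$ kernels, this rules out any non-trivial common quotient.

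\emph{Second statement.} Fix distinct primes $\p \ne \p'$ of $\OL$ above an odd prime $p$ (in the exceptional case $n = 5 \cdot 3^k$ there is only one prime above $3$, so the claim is vacuous). Let $\F = \F_p[\zeta + \zeta^{-1}]$. By Lemma~\ref{dickson}, both projections $\wDelta_p \to \SL_2(\F)$ via $\p$ and via $\p'$ are surjective. Applying Goursat to the image in $\SL_2(\F) \times \SL_2(\F)$, a non-trivial common quotient would be witnessed by some $\phi \in \mathrm{Aut}(\SL_2(\F)) = \mathrm{PGL}_2(\F) \rtimes \Gal(\F/\F_p)$ with $\phi(\gamma \bmod \p) = \gamma \bmod \p'$ for every $\gamma \in \Delta$.

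Because $U$ has entries in $\Z$, its reductions modulo $\p$ and $\p'$ coincide as an element $\bar U \in \SL_2(\F)$, so $\phi$ must fix $\bar U$. A short matrix computation identifies the centralizer of $\bar U$ in $\mathrm{Aut}(\SL_2(\F))$ as the subgroup generated by the one-parameter unipotent subgroup of $\mathrm{PGL}_2(\F)$ commuting with $\bar U$ together with $\Gal(\F/\F_p)$. Imposing in addition $\phi(T \bmod \p) = T \bmod \p'$ then collapses the $\mathrm{PGL}_2$ component to the identity and reduces $\phi$ to a field automorphism $\psi \in \Gal(\F/\F_p)$ with $\psi(\zeta + \zeta^{-1} \bmod \p) = \zeta + \zeta^{-1} \bmod \p'$. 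These two reductions lie in distinct Frobenius orbits in $\F$ (which is precisely what makes $\p$ and $\p'$ distinct primes above $p$), so no such $\psi$ exists; the Goursat obstruction vanishes and the mod-$\p\p'$ image is the full product $\SL_2(\F) \times \SL_2(\F)$. The corresponding statement for $\prod_{\p \mid p} \SL_2(\OL_\p)$ then follows from a Frattini/adjoint lifting up the congruence filtration, using the irreducibility of the adjoint module $M^0(\F)$ as an $\SL_2(\F)$-representation (the odd-characteristic analog, considerably easier, of the arguments of Section~\ref{2adicfirst}). The main obstacles I anticipate are the centralizer calculation for $\bar U$ and a careful treatment of the Frobenius-orbit claim when $p$ is ramified in $\OL$.
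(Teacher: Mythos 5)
Your proposal follows the same overall strategy as the paper: Goursat's lemma both for the decomposition across rational primes and for the primes above a fixed odd~$p$, together with the centralizer/automorphism analysis that forces any Goursat isomorphism to be a Galois automorphism, and then a Frobenius-orbit contradiction. However, there are two genuine gaps in the second statement, and one factual slip in the first.

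\emph{Gap 1: pairs do not automatically give the full product.} Your opening sentence asserts that the two-factor Goursat statement "extends by induction to products over any family of factors." That is fine for the first statement, where the factors at distinct rational primes have pairwise-disjoint quotients. But for the second statement the factors $\SL_2(\OL/\p)$ over $\p \mid p$ are abstractly \emph{isomorphic}, so the "no common nontrivial quotient" hypothesis fails between them and the induction collapses. You prove pairwise surjectivity onto $\SL_2(\F)\times\SL_2(\F)$ and then jump to the full product; the paper supplies the missing ingredient via~\cite[Lemma~3.3]{Ribet}, which uses perfectness of~$\SL_2(\F)$ (available here since~$|\F|\ge 5$). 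Some such input is needed.

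\emph{Gap 2: the ramified case.} You flag your Frobenius-orbit claim as uncertain when~$p$ is ramified. The paper sidesteps this entirely by first reducing to the case~$(n,p)=1$: with~$\eta=\zeta^{p^k}$ and~$F=\Q(\eta+\eta^{-1})$, the extension~$K/F$ is totally ramified at~$p$ with the same residue fields, and the mod-$\p$ images of~$\Delta_n$ and~$\Delta_m$ coincide up to a Galois twist, so the claim for~$\Delta_n$ follows from that for~$\Delta_m$, where~$p$ is unramified and the minimal polynomial of~$\zeta+\zeta^{-1}$ is separable mod~$p$. Your version is in fact salvageable without this reduction (distinct primes still correspond to distinct irreducible factors mod~$p$, hence disjoint Frobenius orbits, even with multiplicity), but as written you have left it open, whereas the paper's reduction removes the issue.

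\emph{Minor slip in the first statement.} You list~$\PSL_2(\F_4)\cong\PSL_2(\F_5)\cong A_5$ as the only coincidence among simple quotients, and dismiss it because it would require characteristic~$2$. But the exceptional image~$2.A_5$ at~$p=3$ from Lemma~\ref{dickson} is abstractly isomorphic to~$\SL_2(\F_5)$ (the binary icosahedral group), so for~$n=5\cdot3^k$ the mod-$3$ and mod-$5$ images are isomorphic and Goursat does not trivialize. It can be ruled out directly --- e.g.\ $U$ has order~$6$ mod~$3$ and order~$10$ mod~$5$, so no graph isomorphism exists --- but this needs to be said. (To be fair, the paper's own sentence "there are no coincidences between~$\SL_2(\F)$ (or~$2.A_5$) between fields of different characteristic" is terse on exactly this point.)

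Finally, a small wording issue: you invoke "Lemma~\ref{dickson}, both projections are surjective" for the second statement, but you should also note, as the paper does, that in all cases where the mod-$\p$ image is \emph{not} the full~$\SL_2(\F)$ (namely $n\in\{3^k,4\cdot3^k,5\cdot3^k\}$; the case~$2\cdot p^k$ is excluded by hypothesis), there is only one prime above~$p$ and the claim is vacuous, so one is entitled to assume full residual image.
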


\begin{proof} By Goursat's Lemma, this is automatically
true unless the different images have common isomorphic finite quotients.
For primes above~$2$, the image is of dihedral type.
For primes above~$3$, the image is either pro-$3$ or a quotient of~$\SL_2(\F)$
for some~$\F$ of residue characteristic~$3$, and any quotient of~$\SL_2(\F_3)$
surjects onto~$\Z/3\Z$ whereas no quotient of the dihedral group has this property.
Other than that, there are no coincidences between~$\SL_2(\F)$
(or~$2.A_5$) between fields of different characteristic. This proves the first claim.
For the second, our assumption implies that the image in~$\SL_2(\F)$ is
either~$\SL_2(\F)$ or the one possible exception inside~$\SL_2(\F_9)$.
If the image is~$\SL_2(\F_3)$, then~$n = 3^k$, $2 \cdot 3^k$, or~$4 \cdot 3^k$; in each case
there is a unique prime above~$3$ so there is nothing to prove.
This also happens when~$n = 5 \cdot 3^k$ for primes in~$\Z[\zeta+\zeta^{-1}]$
since~$3$ is inert in~$\Q(\sqrt{5})$. Thus we may assume that the residual
images of all the~$p$-adic representations is equal to~$\SL_2(\F)$.

By Goursat's lemma, it suffices to prove that the map to
$$\prod_{\p} \SL_2(\OL/\p)$$
is also surjective. 
Assume first that~$n = m p^k$. Since there is only something to prove when there are at least
two primes above~$p$ in~$K$, we may also assume that~$m \ge 3$.
Write~$\eta = \zeta^{p^k}$ and let~$F \subset K$ be the field~$F = \Q(\eta + \eta^{-1})$.
Then~$K/F$ is totally ramified at all primes above~$p$, we have~$\OL_F
= \Z[\eta + \eta^{-1}]$. For each prime~$\p$ in~$\OL$ above~$p$,
if~$\p_F = \p \cap \OL_F$ then~$\OL_F/\p_F = \OL/\p$. In particular,
there is a canonical isomorphism of target groups
\begin{equation}
\label{theyarethesame}
\prod_{\p | p} \SL_2(\OL/\p) \simeq \prod_{\p_F | p}  \SL_2(\OL_F/\p_F).
\end{equation}
Since~$\zeta^{p^n} \equiv \eta \bmod \p$,
 the images of~$\Delta_n$ and~$\Delta_m$ coincide (up to conjugation
 in~$\Gal(K/\Q)$ by the inverse of~$[p^k] \in (\Z/m \Z)^{\times}/\pm 1$)
after the targets are identified by equation~(\ref{theyarethesame}).
This reduces the claim for~$\Delta_n$ to the for~$\Delta_m$,
and thus we may assume that~$(n,p) = 1$.

By~\cite[Lemma~3.3]{Ribet}, it suffices to prove the image surjects onto any pair of factors
(note that we are in the situation where we may assume that~$|\F| \ge 5$ so~$\SL_2(\F)$ is perfect).
By Goursat's lemma, the result follows unless the image is a graph~$(x,\phi(x))$ for some isomorphism~$\phi: \SL_2(\F) \rightarrow \SL_2(\F)$.
 It follows by considering the image of~$U$ that~$\phi$ fixes
$$\left( \begin{matrix} 1 & 0 \\ 2 & 1 \end{matrix} \right).$$
Let us examine the image of~$T$ in~$\SL_2(\OL/\p)$ and~$\SL_2(\OL/\sigma \p)$
respectively
for a pair of primes~$\sigma \p \ne \p$.
We fix an isomorphism~$\F = \OL/\p$, so that the image of~$\zeta + \zeta^{-1}$
in~$\OL/\p$  is~$\zeta+\zeta^{-1}$. The image of~$\zeta+\zeta^{-1}$
in~$\F= \OL/\sigma \p$
has the form~$\sigma^{-1}(\zeta+\zeta^{-1})$ in~$\OL/\p$.
Thus
\begin{equation} \label{goursat}
\begin{aligned}
 \phi \left(\left( \begin{matrix} 1 & 0 \\ 2 & 1 \end{matrix} \right) \right) &  = \left( \begin{matrix} 1 & 0 \\ 2 & 1 \end{matrix} \right), \\
\phi \left(\left( \begin{matrix} 1 & 2 + \zeta + \zeta^{-1} \\
0 & 1 \end{matrix} \right) \right)
& = 
\left( \begin{matrix} 1 & 2 +\sigma^{-1}(\zeta + \zeta^{-1}) \\
0 & 1 \end{matrix} \right). \end{aligned}
\end{equation}
The assumption that~$\p \ne \sigma \p$
is the assumption that~$\sigma$ is not in the subgroup of~$\Gal(K/\Q)$
 generated by the Frobenius
element~$\Frob_p \in \Gal(K/\Q)$ (which would mean that~$\p = \sigma \p$).
All automorphisms of~$\SL_2(\F)$ can be written as a product
of a diagonal automorphism (conjugation by~$\GL_2(\F)$)
and the action of~$\Gal(\F/\F_p)$, which is generated by Frobenius. 
(The latter are called field automorphisms or Galois automorphisms.)
The centralizer of a unipotent element in~$\GL_2(\F)$ meanwhile consists of the group
generated by that element together with scalar matrices (and the latter give trivial
automorphisms.)
In particular, the only $\phi$ compatible with the first line of~\eqref{goursat}
is a product of a field automorphism with conjugation by some power of the given element.
The only such automorphisms which send a (non-diagonal) upper triangular unipotent
matrix to another upper triangular unipotent matrix are the field automorphisms.
Thus~$\phi$ has to be a Galois
automorphism, and thus it must be the case that:
$$\Frob^i_p (\zeta +\zeta^{-1}) \equiv \sigma^{-1}(\zeta + \zeta^{-1}) \mod \p,$$
for some~$i$ where~$\sigma \in \Gal(K/\Q)$ is not in the group generated by~$\Frob_p$. 
In particular, these are different 
global conjugates of $\zeta + \zeta^{-1}$ and hence such a congruence is impossible
if the minimal polynomial of~$\zeta+\zeta^{-1}$ is separated modulo~$p$.
But the ring of integers in~$\Q(\zeta+\zeta^{-1})$ is~$\Z[\zeta+\zeta^{-1}]$,
and since we are assuming that~$\zeta$ has order~$(m,p)=1$, this field is unramified at~$p$,
and thus we are done.
\end{proof}

We now have:

\begin{lemma} \label{unramp}
Suppose that~$p$ is odd and~$(n,p)=1$. Then the map~$\Delta \rightarrow \SL_2(\OL_{\p})$
is surjective.
\end{lemma}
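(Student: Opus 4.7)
The plan is to combine Lemma \ref{dickson} (mod $\p$ surjectivity) with Burnside's basis theorem applied to the pro-$p$ congruence kernel $\Gamma(\p) := \ker(\SL_2(\OL_\p) \to \SL_2(\F))$, exactly in the spirit of the remark following display~(\ref{firstadjoint}).

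First I would establish residual surjectivity onto $\SL_2(\F)$. Since $(n,p) = 1$, the element $\zeta$ reduces to an element of exact order $n$ in $\F^\times$ (with $\F = \OL/\p$), and by Lemma \ref{dickson} the images of $T$ and $U$ generate $\SL_2(\F)$ (I note the one possible offender $n = 5 \cdot 3^k$, $p = 3$ is the exception in that lemma and is treated elsewhere). Since $\SL_2(\OL_\p) = \varprojlim_r \SL_2(\OL_\p/\p^r)$, it then remains to show $\wDelta_\p \supset \Gamma(\p)$.

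When $p$ is odd and $\p$ is unramified, a routine computation gives $(1 + pA)^p \equiv 1 + p^2 A \pmod{\p^3}$ and $[\Gamma(\p),\Gamma(\p)] \subset \Gamma(\p^2)$, so the Frattini subgroup of the finitely generated pro-$p$ group $\Gamma(\p)$ equals $\Gamma(\p^2)$. By Burnside's basis theorem, it suffices to check that $\wDelta_\p \cap \Gamma(\p)$ surjects onto
$$\Gamma(\p)/\Gamma(\p^2) \;\simeq\; M^0(\F).$$
The image is a subgroup stable under the adjoint action of $\SL_2(\F)$ (because $\wDelta_\p$ surjects onto $\SL_2(\F)$ by step one, and $\wDelta_\p \cap \Gamma(\p)$ is normal in $\wDelta_\p$), hence an $\F_p[\SL_2(\F)]$-submodule. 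The analogue of Lemma \ref{isirreducible} applies to $M^0(\F)$: the three-dimensional adjoint representation $\mathrm{Sym}^2(\mathrm{std})$ of $\SL_2(\bar{\F}_p)$ is absolutely irreducible for $p > 2$, and its $\mathrm{Gal}(\F/\F_p)$-conjugates are distinguished by the trace on $R$ (namely $\zeta^{2p^i} + 1 + \zeta^{-2p^i}$ for varying $i$), so $M^0(\F)$ is irreducible over $\F_p[\SL_2(\F)]$. Thus it suffices to produce a single nonzero element in the image.

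Finally, since $\overline{T} \in \SL_2(\F)$ is unipotent and so has order $p$, the element $T^p$ belongs to $\Delta \cap \Gamma(\p)$, and
$$T^p = I + p\begin{pmatrix} 0 & 2 + \zeta + \zeta^{-1} \\ 0 & 0 \end{pmatrix}$$
has a nonzero image in $M^0(\F)$ as soon as $2 + \zeta + \zeta^{-1} \not\equiv 0 \pmod{\p}$; but $\zeta \equiv -1 \pmod{\p}$ would force $\zeta$ to have order $2$ in $\F^\times$, contradicting that it has exact order $n \geq 3$. The main obstacle is verifying the Frattini and irreducibility statements cleanly, but both are standard extensions of machinery already deployed in the paper, and the rest is a direct computation with $T$.
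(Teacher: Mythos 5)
Your overall architecture matches the paper's proof: residual surjectivity from Lemma~\ref{dickson}, reduction via Burnside's basis theorem to the Frattini quotient $\Gamma(\p)/\Gamma(\p^2) \simeq M^0(\F)$, irreducibility of that quotient as an $\F_p[\SL_2(\F)]$-module, and then exhibiting a nonzero element via $T^p$. (The paper produces a nonzero element in every graded piece $\Delta(\p^r)/\Delta(\p^{r+1})$ and inducts directly, rather than going through the Frattini quotient of $\Gamma(\p)$, but these are equivalent arguments.)

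There is, however, a genuine gap in your irreducibility step. You claim the $\Gal(\F/\F_p)$-conjugates of the adjoint are distinguished by the trace on $R$, i.e.\ by the values $\zeta^{2p^i} + 1 + \zeta^{-2p^i}$. But this requires $(\zeta + \zeta^{-1})^2 - 1 = \zeta^2 + 1 + \zeta^{-2}$ to generate $\F$ over $\F_p$, and that can fail: for $n=8$, $p=3$ (which satisfies $(n,p)=1$), one has $\F = \F_3[\zeta_8 + \zeta_8^{-1}] = \F_9$ while $(\zeta_8+\zeta_8^{-1})^2 = 2 \in \F_3$, so both Frobenius conjugates give the same trace on $R$ and your criterion sees nothing. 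The module \emph{is} still irreducible here; your proof just does not establish it. The paper avoids this by not tying itself to $R$: every $\gamma \in \F$ is the trace of some element of $\SL_2(\F)$, which has adjoint trace $\gamma^2 - 1$, and the set $\{\gamma^2 : \gamma \in \F\}$ generates $\F$ additively over $\F_p$ (every element of a field of odd characteristic is a difference of two squares), so the $f$ Galois conjugates of the adjoint have distinct trace functions. You should replace your $R$-specific trace calculation with this argument.

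A smaller point: you assert $\Phi(\Gamma(\p)) = \Gamma(\p^2)$ but only verify the containment $\Phi(\Gamma(\p)) \subseteq \Gamma(\p^2)$ (via $\Gamma(\p)^p \subseteq \Gamma(\p^2)$ and $[\Gamma(\p),\Gamma(\p)] \subseteq \Gamma(\p^2)$). The reverse containment, which is what Burnside actually needs, requires showing $\Gamma(\p^2) \subseteq [\Gamma(\p),\Gamma(\p)]\,\Gamma(\p)^p$; this is the content of Lemma~\ref{frattini} in the paper (the surjectivity of $(A,B) \mapsto AB - BA$ onto $M^0(\F)$ for odd $p$), and you should cite or reproduce it rather than derive equality from a one-sided computation.
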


\begin{proof}
We have seen that the image to~$\SL_2(\F)$
is surjective.  Since~$p$ is unramified, the
image of~$T^{p^r}$ is non-trivial in~$\Delta(\p^r)/\Delta(\p^{r+1})$
for every~$r \ge 1$. If the adjoint representation is irreducible as a representation
of~$\SL_2(\F)$, then the image must be all of~$\Delta(\p^r)/\Delta(\p^{r+1})$,
and then the result follows by induction. To see that the adjoint representation~$Q := 
\Delta(\p)/\Delta(\p^2)$
is irreducible as an~$\F_p[\SL_2(\F)]$-module, we may
argue as in Lemma~\ref{isirreducible}. Namely, $Q \otimes_{\F_p} \F$ decomposes
as a direct sum of the~$\Gal(\F/\F_p)$-conjugates of the adjoint representation over~$\F$,
and as long as the traces of elements of~$\SL_2(\F)$ on this module generate~$\F$,
the conjugates are all distinct. Any element~$\gamma \in \F$ is the trace of a matrix in~$\SL_2(\F)$,
and the trace of the same element on the adjoint representation is~$\gamma^2 - 1$. So it suffices
to observe that the elements~$\gamma^2$ for~$\gamma \in \F$ generate~$\F$ over~$\F_p$.
\end{proof}

\subsection{Ramified Primes} We now deal with (odd) ramified primes.
As we have seen, if~$n = 2 p^k$ and~$k \ge 1$, then the mod-$\p$ representations
themselves only have cyclic image, so we defer this case until
Section~\ref{redcase}.
We begin with some preliminaries. 
For any integer~$d \ge 2$,
we certainly have an exact sequence of groups:
\begin{equation}
\label{reducefrattini}
0 \rightarrow M \rightarrow \SL_2(\OL/\p^{d}) \rightarrow \SL_2(\F) \rightarrow 0.
\end{equation}
Letting~$\pi$ be a generator of~$\p/\p^2$, we may think of elements
of~$M$ as matrices of the form~$1 + \pi A$ with~$A \in M(\OL/\p^{d-1})$ (but
note that this does not respect the group structure). The group~$M = M(\p)$
has an obvious filtration~$M(\p^i)$ and~$M^0(\F) \simeq M(\p^i)/M(\p^{i+1})$
sending~$A$ to~$1 + \pi^i A$ which is an isomorphism both of abelian groups
and  as modules for~$\F_p[\SL_2(\F)]$.

We first need to
understand the structure of the~$p$-group~$M$.
Let~$\Phi(M) \subset M$ denote the Frattini subgroup (the intersection
of all maximal subgroups of~$M$).
Since~$M$ is a~$p$-group, it has the form~$\Phi(M)=[M,M]M^p$,
and~$M/\Phi(M)$ is elementary~$p$-abelian.

\begin{lemma} \label{frattini} The Frattini subgroup~$\Phi(M)$
is isomorphic to~$M(\p^2)$,
so the Frattini quotient~$M/\Phi(M)$ is isomorphic
to the image of~$M$ in~$\SL_2(\OL/\p^2)$. 
\end{lemma}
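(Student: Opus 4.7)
The plan is to establish both inclusions $\Phi(M) \subseteq M(\p^2)$ and $M(\p^2) \subseteq \Phi(M)$, working with the filtration $M = M(\p) \supseteq M(\p^2) \supseteq \cdots$ whose graded pieces are all isomorphic to $M^0(\F)$. The first inclusion is immediate: since $M/M(\p^2) \cong M^0(\F)$ is an elementary abelian $p$-group, it coincides with its own Frattini quotient, so every maximal subgroup of $M$ contains $M(\p^2)$, and intersecting gives $\Phi(M) \subseteq M(\p^2)$.

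For the reverse inclusion, it suffices to show $[M, M] \supseteq M(\p^2)$, since $[M,M] \subseteq \Phi(M)$. The key input is that $\mathfrak{sl}_2(\F) = M^0(\F)$ is a perfect Lie algebra when $p$ is odd: from the standard relations $[e,f] = h$, $[h,e] = 2e$, $[h,f] = -2f$, the bracket is $\F$-linearly surjective onto $M^0(\F)$ whenever $2 \in \F^{\times}$. Combined with the standard filtered-associative identity
$$[1 + \pi^i A, 1 + \pi^j B] \equiv 1 + \pi^{i+j}(AB - BA) \pmod{\p^{i+j+1}},$$
this shows that for every $i, j \geq 1$, the group commutator $[M(\p^i), M(\p^j)]$ surjects onto $M(\p^{i+j})/M(\p^{i+j+1})$ under the identification with $M^0(\F)$.

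I would then complete the proof by descending induction on $r \geq 2$: given $x \in M(\p^r)$, use perfectness of $\mathfrak{sl}_2(\F)$ to write the image of $x$ in $M(\p^r)/M(\p^{r+1})$ as a sum of Lie brackets, lift these to group commutators of elements of $M(\p)$ and $M(\p^{r-1})$ to produce $c \in [M,M] \cap M(\p^r)$ with the same image as $x$ modulo $M(\p^{r+1})$, and then apply the inductive hypothesis to $xc^{-1} \in M(\p^{r+1})$; the recursion terminates because $M(\p^d) = \{1\}$. The main subtlety is just bookkeeping through the filtration; the only genuine arithmetic input is the perfectness of $\mathfrak{sl}_2(\F)$, which fails in characteristic two but is valid under the standing hypothesis that $p$ is odd in this section. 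No appeal to $p$-th powers is required, even though they are allowed in $\Phi(M) = [M,M]M^p$.
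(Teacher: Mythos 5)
Your proof is correct and is essentially the paper's argument: both directions ($\Phi(M)\subseteq M(\p^2)$ from $M/M(\p^2)\cong M^0(\F)$ being elementary $p$-abelian, and $M(\p^2)\subseteq [M,M]$ via the filtered commutator identity, descending induction from $M(\p^d)=\{1\}$, and the fact that $AB-BA$ spans $M^0(\F)$ for $p>2$) match the paper's proof. You simply phrase the key input as perfectness of $\mathfrak{sl}_2(\F)$ and spell out the inductive bookkeeping in more detail, but the content is identical.
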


\begin{proof}
For~$i,j \in 1$, we have:
$$[I + \pi^i A,1 + \pi^j B] = 1 + \pi^{i+j}(AB - BA) \bmod \pi^{i+j+1}.$$
But since~$AB-BA$ generates~$M^0(\F)$ (assuming as we are that~$p > 2$), we see that~$[M,M]$
inductively contains~$M(\p^i)$
starting with~$i = d$ and descending down to~$i = 2$.
Hence~$\Phi(M) \supseteq [M,M]$ contains this subgroup as well. But~$M/M(\p^2)$
is elementary~$p$-abelian, so~$\Phi(M) \subseteq M(\p^2)$ and thus~$\Phi(M) = M(\p^2)$.
\end{proof}

An alternative way to phrase this is that the Frattini quotient of the pro-$p$
group~$M$ which is the kernel of~$\SL_2(\OL_{\p}) \rightarrow \SL_2(\F)$
has~$\Phi(M) \simeq M(\p^2)$, which corresponds to the limit case case~$d =\infty$.
We now have:

\begin{lemma} \label{mnot2} Let~$p$ be an odd prime dividing~$n$. Suppose that~$n$
is not of the form~$n = 2 p^k$ for~$k \ge 1$.
 Then the map~$\Delta \rightarrow \SL_2(\OL_{\p})$
is surjective, unless~$n = 5 \cdot 3^k$ for some~$k \ge 0$, in which case the image has index~$6$.
\end{lemma}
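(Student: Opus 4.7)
The plan is to apply Burnside's basis theorem to the pro-$p$ kernel $M = \ker(\SL_2(\OL_\p) \to \SL_2(\F))$. By Lemma~\ref{frattini}, its Frattini quotient is $M/\Phi(M) \simeq M^0(\F)$, and the Galois-conjugates argument of Lemma~\ref{unramp} shows this to be irreducible as an $\F_p[\SL_2(\F)]$-module. Combined with Lemma~\ref{dickson} describing the residual image, it suffices to exhibit a single element of $\Delta \cap M \setminus M(\p^2)$; in the exceptional case $n = 5\cdot 3^k$, one further checks that $M^0(\F_9)$ restricts irreducibly to $2\cdot A_5$, which follows because traces of order-$5$ elements on the adjoint action lie in $\F_9 \setminus \F_3$, so the two $\Gal(\F_9/\F_3)$-conjugates of the $3$-dimensional $\F_9$-representation are distinct.

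Whenever $m > 1$ (which covers the exceptional case, where $m = 5$), I would take $g = R^m$. Since $\bar R$ has eigenvalues $\zeta_m^{\pm 1}$ in $\F$ of order exactly $m$, we have $R^m \in M$. Using $R^m = a_m R + b_m I$ with $a_m = (\zeta_n^m - \zeta_n^{-m})/(\zeta_n - \zeta_n^{-1})$, the $(2,1)$-entry of $R^m - I$ equals
\[
a = \frac{\zeta_{p^k} - \zeta_{p^k}^{-1}}{\zeta_n - \zeta_n^{-1}}.
\]
In $\OL_L$ the numerator has valuation $1$ (since $1 - \zeta_{p^k}$ is a uniformizer of $\Z[\zeta_{p^k}]$ and $1+\zeta_{p^k}$ is a unit at primes above the odd prime $p$), the denominator is a unit because $\zeta_n \equiv \zeta_m$ modulo primes above $p$ and $\zeta_m, \zeta_m + 1$ are nonzero in $\F$ for $m \ge 3$, and $L/K$ is unramified at primes above $p$ when $m > 1$ by the inertia description of $\Gal(L/\Q)$ recalled in Section~2. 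Hence $a$ has valuation exactly $1$ in $\OL_\p$, so $R^m \in M(\p) \setminus M(\p^2)$.

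It remains to treat $m = 1$, i.e., $n = p^k$. If $n = 3$ then $K = \Q$, $\Delta = \SL_2(\Z)$, and surjection onto $\SL_2(\Z_3)$ is classical. For $n = 3^k$ with $k \ge 2$, the analogous computation shows $R^3 \in M(\p) \setminus M(\p^2)$, since the $(2,1)$-entry of $R^3 - I$ equals $(\zeta_{3^{k-1}} - \zeta_{3^{k-1}}^{-1})/(\zeta_{3^k} - \zeta_{3^k}^{-1})$, of valuation $(3-1)/2 = 1$ in $\OL_\p$ (here $L/K$ is ramified of index~$2$, bringing the valuation $p-1 = 2$ in $\OL_L$ down to $1$ in $\OL_K$). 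For $n = p^k$ with $p \ge 5$, no power of $R$, $T$, or $U$ gives an element of $M$ of depth exactly one (the candidates $R^p, T^p, U^{2p}$ all lie in $M(\p^2)$), so I would argue indirectly via traces: if the image $H \subset \SL_2(\OL_\p/\p^2)$ of $\Delta$ met $M^0(\F_p)$ trivially then $H$ would be a complement of $M^0(\F_p)$, and using the standard vanishing $H^1(\SL_2(\F_p), M^0(\F_p)) = 0$ for $p \ge 5$, every such complement is conjugate to the Teichmüller lift; by conjugation-invariance of the trace, every element of $H$ has trace in the Teichmüller image of~$\F_p$ inside $\OL_\p/\p^2$. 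But the trace of $R$ is $\beta = \zeta_n + \zeta_n^{-1}$, and $\beta - 2 = -N_{L/K}(1 - \zeta_{p^k})$ is a uniformizer of $\OL_\p$, so $\beta \bmod \p^2$ has nonzero $\p$-part and cannot lie in Teichmüller~$\F_p$. This contradicts $R \in H$, forcing $H \cap M^0(\F_p) \ne 0$; irreducibility then gives $H = \SL_2(\OL_\p/\p^2)$. The index~$6$ in the exceptional case follows from $[\SL_2(\F_9) : 2\cdot A_5] = 6$.

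The hardest step is the subcase $m = 1$ with $p \ge 5$, where no natural candidate element of $\Delta \cap M \setminus M(\p^2)$ presents itself; the cohomological trace argument sidesteps this obstacle by exploiting the extension structure of $\SL_2(\OL_\p/\p^2) \to \SL_2(\F_p)$ rather than constructing an explicit element.
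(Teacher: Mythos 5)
The proposal is correct and follows the paper's strategy for the reduction via Burnside and Lemma~\ref{frattini}, and its treatment of the case $m>2$ (using the $(2,1)$-entry of $R^m-I$ rather than the paper's eigenvalue computation, but this is essentially the same estimate). The genuine divergence is in the case $n=p^k$. The paper handles all odd $p$ uniformly with the \emph{ad hoc} element $V=RU$ and the power $\gamma=V^{p^2-1}$, pinning down $v_\p(\lambda-1)=1$ for an eigenvalue via a Newton-polygon computation on the characteristic polynomial of $\eta/\varepsilon-1$. You instead treat $p=3$ explicitly (via $R^3$, whose $(2,1)$-entry has $\p$-valuation $1$ after accounting for $e(\P/\p)=2$) and for $p\ge 5$ use a cohomological shortcut: if the image $H$ in $\SL_2(\OL_\p/\p^2)\simeq\SL_2(\F_p[\epsilon])$ met $M^0(\F_p)$ trivially it would be a complement, and the vanishing $H^1(\SL_2(\F_p),\mathfrak{sl}_2)=0$ for $p\ge5$ forces any complement to be $M^0$-conjugate to the constant (Teichm\"uller) lift, hence to have traces in $\F_p\cdot 1$; but $\mathrm{tr}(R)=\zeta+\zeta^{-1}=2-N_{L/K}(1-\zeta)$ has a nonzero $\p$-part, a contradiction. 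This is a valid and rather elegant alternative: it trades the delicate Newton-polygon estimate for a standard cohomological vanishing plus a trace observation, at the cost of an extra case split at $p=3$ (where $H^1\ne 0$) and of having to verify that $\OL_\p/\p^2\simeq\F_p[\epsilon]$ (true here since $e\ge2$ when $n=p^k$, $p$ odd, except for $n=3$ which you handle separately as $\Delta=\SL_2(\Z)$). One small point of care where you are actually \emph{more} precise than the paper: in the exceptional case $n=5\cdot3^k$ one needs $M^0(\F_9)$ irreducible as a module over the image $2\cdot A_5$, not merely over $\SL_2(\F_9)$, and you check this explicitly via the traces of order-$5$ elements.
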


\begin{proof}
Let us consider the image~$G$ of~$\Delta$ in~$\SL_2(\OL/\p^d)$
for some integer~$d \ge 2$, and let~$G(\p)$ be the kernel
of the map~$G \rightarrow \SL_2(\F)$. 
We know, by assumption, that the image of~$G$ in~$\SL_2(\F)$ is either everything or has
index~$6$ in the one exceptional case. But in either case, the adjoint representation
is irreducible as an~$\F_p[\SL_2(\F)]$-module.

From Lemma~\ref{frattini}, there is  a map:
$$G(\p) \hookrightarrow M \rightarrow M/\Phi(M) \subset \SL_2(\OL/\p^2).$$
If~$G(\p)$ surjects onto~$M/\Phi(M)$ then~$G \simeq M$ by Burnside's
basis theorem~\cite[Thm~12.2.1]{Marshall}. Since~$d \ge 2$ was arbitrary,
the result follows.
 Hence it suffices to show
 that
 $$G(\p)/G(\p^2) \subseteq \Gamma(\p)/\Gamma(\p^2)$$
  is non-zero,
since the adjoint representation is irreducible.
In particular, it suffices to write down a single element with non-zero image.

Write~$n = m p^k$ with~$(m,p)=1$ and~$m \ne 2$.
Let~$v$ denote the valuation on~$K_{\p}$ so that~$v(\p \setminus \p^2) = 1$.
Note that~$v$ extends to any finite extension of~$K_{\p}$.
If~$\gamma \in G(\p^2)$, then
any eigenvalue~$\lambda \in \Kbar$  of~$\gamma$
satisfies
\begin{equation}
\label{lower}
v(\lambda - 1) \ge 2.
\end{equation}
Thus it suffices to write down an element~$\gamma$ which is trivial modulo~$\p$ (so~$\gamma \in G(\p)$)
 and has eigenvalues
which do not satisfy~(\ref{lower}) so~$\gamma \notin G(\p^2)$.
The element~$R$ has eigenvalues~$\zeta$ and~$\zeta^{-1}$.
If~$n = p^k m$ with~$m > 2$, then~$\zeta$ and~$\zeta^{-1}$
are distinct modulo~$\p$, and thus the reduction of~$R$ in~$\SL_2(\OL/\p)$
is semisimple. It follows that~$\gamma = R^m$
is trivial modulo~$\p$ and has eigenvalues~$\zeta^m = \zeta_{p^k}$ and~$\zeta^{-1}_{p^k}$.
If~$m > 2$, then~$\Q(\zeta)/\Q(\zeta + \zeta^{-1})$ is unramified at~$p$,
and thus~$v(\zeta_{p^k} - 1) = 1$. Thus~$\gamma$ gives the desired element,
and so we are done unless~$m \le 2$. We are assuming that~$m \ne 2$.
If~$m=1$,  then the reduction of~$R \bmod \p$ is not semisimple, and hence some alternate
argument is required, which we turn to now.

We now assume that $m=1$ so~$n =p^k$. There is now a unique prime~$\pi = \zeta + \zeta^{-1} - 2$
above~$p$ in~$K$ with residue field~$\F_p$.
We use  the following \emph{ad hoc} construction.
Let~$V = RU = UTU$; we will now show that~$\gamma = V^{p^2 - 1}$ lies in~$G(\p)$
and is non-trivial in~$G(\p)/G(\p^2)$. 
We first note that the mod~$\p$ reduction of~$V$ is as follows:
$$\Vbar = \left(\begin{matrix} -3 & 4 
\\ 2 & -3 \end{matrix} \right) \bmod \p.$$
The element~$\Vbar$ is semisimple 
because the characteristic polynomial~$x^2 + 6 x + 1 \in \OL/\p[x]$
is separable (the discriminant is~$32$ and~$p > 2$). Hence the order of~$\Vbar \in \SL_2(\F_p)$
divides~$p^2-1$, and so~$\gamma = V^{p^2-1}$ lies in~$G(\p)$.
Let~$\varepsilon = - 3 - 2 \sqrt{2}$ be a root of~$x^2+6x+1 \in K[x]$.
We note that~$\varepsilon^{p^2-1} \equiv 1 \bmod p$, so~$v(\varepsilon^{p^2 - 1} - 1) \ge e > 1$.
The eigenvalues of~$\Vbar$ may be identified
with the mod-$p$ reductions of~$\varepsilon$ and~$\varepsilon^{-1}$.
Let~$\eta$ be the  eigenvalue of~$V$ which is congruent to~$\varepsilon$ modulo~$\p$,
and let~$\lambda = \eta^{p^2 - 1}$ be the corresponding eigenvalue of~$\gamma$,
so certainly~$v(\lambda - 1) \ge 1$.
We claim  that if~$v(\eta/\varepsilon - 1) = 1$, then~$v(\lambda - 1) = 1$. Assume otherwise,
so~$v(\lambda - 1) > 1$.
If~$v(\xi-1) = 1$, then~$v(\xi^i - 1) = 1$ for~$(i,p) = 1$,
since~$(\xi^i-1)/(\xi-1) =1+\xi+\ldots \xi^{i-1} \equiv i \bmod \p$. Thus~$v(\eta/\varepsilon - 1) = 1$ implies that
$$
\begin{aligned}
1 = & \ v((\eta/\varepsilon)^{p^2-1} - 1) = v(\lambda/\varepsilon^{p^2-1} - 1) \\
= & \  v(\lambda - \varepsilon^{p^2-1})  - v(\varepsilon^{p^2 - 1})  =  v(\lambda - \varepsilon^{p^2-1})  \\
= & \  v\left( (\lambda - 1) + (1 - \varepsilon^{p^2 - 1})\right)\\
\ge & \ \min(v(\lambda - 1),v(1 - \varepsilon^{p^2 - 1}))\\
> & \ 1, \end{aligned}$$
since~$v(\varepsilon^{p^2 -1} - 1)  > 1$ and we are assuming that~$v(\lambda - 1) > 1$.
This is a contradiction.
Hence if~$v(\eta/\varepsilon - 1) = 1$, then~$v(\lambda-1) = 1$,
and then~$\gamma \in \Delta(\p)/\Delta(\p^2)$
is non-trivial, and we are done.
The characteristic polynomial of~$V$,
with~$\pi = \zeta+\zeta^{-1} - 2$,  is given by
$$T^2 + (6 + 2 \pi) T + 1.$$
We now compute that~$\eta/\varepsilon - 1$ is a root of the polynomial
$$T^2 + (6 \varepsilon^{-1} + 2 \varepsilon^{-1} \pi + 2) T + 2 \varepsilon^{-1} \pi = 0.$$
The Newton polygon of this polynomial has slopes~$0$ and~$1$, from which
 we deduce that~$v(\eta/\epsilon - 1) = 1$,
as desired. 
\end{proof}

\subsection{The $p$-adic image when~$n=2p^k$} \label{redcase}
We prove the following:

\begin{lemma} \label{ramp}
If~$n = 2 p^k$ with~$p > 2$ and~$k \ge 1$,
 the closure of~$\Delta$ in~$\SL_2(\OL \otimes \Z_p)$
 is a~$p$-Sylow subgroup, and the index coincides with the
 index of~$p$ in~$\SL_2(\F_p)$, namely~$p^2-1$.
\end{lemma}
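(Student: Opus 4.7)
The plan is to follow the strategy of Lemma~\ref{mnot2}: first identify the image of~$\Delta$ in~$\SL_2(\OL/\p)=\SL_2(\F_p)$, and then use Burnside's basis theorem to lift this to an identification of the full closure~$\wDelta_p$. The new feature in the ramified case~$n=2p^k$ is that~$\pi=2+\zeta+\zeta^{-1}$ is itself a uniformizer at the unique prime~$\p$ of~$\OL_K$ above~$p$ (as one sees from $\pi=-\eta^{-1}(\eta-1)^2$ with~$\eta=-\zeta$ a primitive~$p^k$th root of unity). Hence $T\in\Gamma(\p)$, and the image of~$\Delta$ in~$\SL_2(\F_p)$ is the cyclic group generated by~$U\bmod\p$; by Lemma~\ref{dickson} this is a~$p$-Sylow of index~$p^2-1$, and the lemma reduces to proving the containment~$\Gamma(\p)\subseteq\wDelta_p$.

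To establish this containment, I would apply Burnside's basis theorem to the pro-$p$ group~$\Gamma(\p)$. By (the pro-$p$ version of) Lemma~\ref{frattini}, the Frattini quotient is $\Gamma(\p)/\Gamma(\p^2)\simeq M^0(\F_p)$, a three-dimensional $\F_p$-vector space, so it suffices to produce three elements of~$\Delta\cap\Gamma(\p)$ whose images in~$M^0(\F_p)$ are linearly independent. My choice is $T$, $UTU^{-1}$, and~$U^2 T U^{-2}$: each lies in~$\Gamma(\p)$ because~$T$ does and~$\Gamma(\p)$ is normal, and their images in the Frattini quotient are, up to a common unit scalar, the matrices~$e_{12}$, $\mathrm{Ad}(U)\,e_{12}$, and~$\mathrm{Ad}(U)^2\,e_{12}$ in~$M^0(\F_p)$, where $e_{12}=\left(\begin{smallmatrix}0&1\\0&0\end{smallmatrix}\right)$.

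The heart of the argument is then the verification that~$e_{12}$ is a cyclic vector for~$\mathrm{Ad}(U)$ acting on~$M^0(\F_p)$. Since~$-I$ is central, $\mathrm{Ad}(U)=\mathrm{Ad}(-U)$, and $-U=\left(\begin{smallmatrix}1&0\\-1&1\end{smallmatrix}\right)$ is unipotent in~$\SL_2(\F_p)$; on the three-dimensional adjoint representation~$M^0(\F_p)$ (irreducible for~$p>2$) a unipotent acts as a single Jordan block of size~$3$, so cyclicity of~$e_{12}$ is equivalent to $(\mathrm{Ad}(U)-I)^2 e_{12}\neq 0$. A direct matrix computation gives $(\mathrm{Ad}(U)-I)\,e_{12}=(e_{11}-e_{22})-e_{21}$ and hence $(\mathrm{Ad}(U)-I)^2 e_{12}=-2\,e_{21}$, which is nonzero for odd~$p$. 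Burnside then yields $\wDelta_p\supseteq\Gamma(\p)$ and the lemma follows.

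The main obstacle is really the cyclic-vector computation above, together with the more subtle point that one cannot just use the pair $T$ and $U^{2p}$ to span~$M^0(\F_p)$: since~$p$ has ramification index $e=\varphi(p^k)/2$ in~$\OL_K$, one has $v_{\p}(2p)=e$, which exceeds~$1$ as soon as either~$k\ge 2$ or~$p\ge 5$; in those cases $U^{2p}\in\Gamma(\p^2)$ and is invisible in the Frattini quotient. The conjugation argument bypasses this by using only elements whose visibility in $\Gamma(\p)/\Gamma(\p^2)$ follows from that of~$T$.
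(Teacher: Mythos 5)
Your proof is correct and follows essentially the same strategy as the paper: observe that $T\in\Gamma(\p)$ and the residual image is $\langle U\rangle\subset\SL_2(\F_p)$, then apply Burnside's basis theorem (via Lemma~\ref{frattini}) by exhibiting three elements of $\Delta\cap\Gamma(\p)$ whose images span the three-dimensional Frattini quotient $M^0(\F_p)$. The paper conjugates $T$ by $V=U^{p+1}$ rather than by $U$ directly, but since $V\equiv \pm U\bmod\p$ one has $\mathrm{Ad}(V)=\mathrm{Ad}(U)$ on $M^0(\F_p)$, so the two choices are interchangeable; your reformulation of the linear-independence check as a cyclic-vector computation $(\mathrm{Ad}(U)-I)^2 e_{12}=-2e_{21}\neq 0$ is a slightly cleaner way of organizing the same verification that the paper does by writing out the three conjugates explicitly.
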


\begin{proof}
When~$n=2p^k$
 there is a unique prime above~$p$ and the residue field is~$\F_p$.
 Our assumptions imply that~$\pi=2+\zeta+\zeta^{-1}$
is a uniformizer, and so the image of~$T$ in~$\SL_2(\OL/\p) = \SL_2(\F_p)$
is trivial.
For any~$d \ge 2$, we consider again the exact sequence~(\ref{reducefrattini}):
$$0 \rightarrow M \rightarrow \SL_2(\OL/\p^d) \rightarrow \SL_2(\F_p) \rightarrow 0.$$
By Lemma~\ref{frattini},  the Frattini quotient of~$M$ has rank~$3$ over~$\F_p$
 and is identified with
the image of~$M$ in~$\SL_2(\F_p[x]/x^2)$.
Let~$N$ be the preimage of~$\langle U \rangle$, so~$N$ contains~$M$ with index~$p$.
When~$n=2p^k$, the image~$G$ of~$\Delta$ in~$\SL_2(\OL/\p^d)$
 is a subgroup of~$N$, since~$U$ maps to~$N$ by construction and~$T$
 maps to~$M$.
We have the following elements in~$G \subset N$ 
 which are non-trivial in~$N/M(\p^2)$:
\begin{equation}
\begin{aligned}
V:=U^{(p+1)} \equiv & \  \left( \begin{matrix} 1 & 0  \\ 1 & 1 \end{matrix} \right) \bmod \pi^2, \\
T = &\ I + \pi  \left( \begin{matrix} 0 & 1  \\ 0 & 0 \end{matrix} \right), \\
VTV^{-1}\equiv  & \  I + \pi \left( \begin{matrix} -1 & 1  \\ -1 & 1 \end{matrix} \right)  \bmod \pi^2,\\
V^2TV^{-2}\equiv  & \  I + \pi \left( \begin{matrix} -2 & 1  \\ -4 & 2 \end{matrix} \right)  \bmod \pi^2, 
\end{aligned}
\end{equation}
but now assuming that~$p \ne 2$, as we may, we see that
the map~$G \cap M \rightarrow M/\Phi(M)$ is surjective,
and thus (once more) by
Burnside's
basis theorem, we deduce that~$G \cap M = M$,
and~$G = N$. Since~$d \ge 2$ was arbitrary, the result follows.
\end{proof}

\section{The~$2$-adic image when~$n = 2^k$} \label{poweroftwo}
To complete our analysis of the image, we need to understand the~$2$-adic closure~$\Delta_2$
of~$\Delta$ when~$n=2^k$ is a power of~$2$. 
The arguments will be quite similar to those in Section~\ref{2adicfirst} but sufficiently different to warrant
separate consideration.
Note that in this case~$\OL = \Z[\zeta+\zeta^{-1}]$ is totally
ramified of degree~$e = 2^{k-1}$ and in particular there is a single prime~$\p = (\zeta+\zeta^{-1})$ above~$2$.
The matrix~$\QQ$ of equation~(\ref{Qmat}) is no longer invertible locally at~$2$, so we have to work
in the original basis. Certainly the image of~$\Delta$ modulo~$\p$ has order~$2$, since~$T$ is trivial
modulo~$\p$. We already know that the image of~$\Delta$ in~$\SL_2(\OL/2)$ is isomorphic to~$D_{n/2}$.
If~$J \subset \OL$ is a proper ideal, let us define the following congruence subgroup:
$$\Gamma^0(J) =
\left(\begin{matrix} a & b \\ c & d \end{matrix} \right)
\equiv \left(\begin{matrix}  * & 0 \\  * &  * \end{matrix} \right) \bmod J.$$
Now if~$I$ and~$J$ are ideals, let us define
$$\Gamma^0(I,J) = \Gamma(I) \cap \Gamma^0(I J),$$
in words, matrices which are trivial modulo~$I$ but with an additional congruence
in the upper right hand corner by a further ideal~$J$.
The image of~$\Delta$ lands inside~$\Gamma^0(\p) = \Gamma^0(1,\p)$.  But we observe that the 
group~$\Gamma^0(2,\p)$ is normal in~$\Gamma^0(1,\p)$, and~$\Gamma^0(2^{r+1},\p)$ is normal
in~$\Gamma^0(2^{r},\p)$. Moreover, for~$r \ge 1$ there is an isomorphism of groups
$\Gamma^0(2^{r},\p)/\Gamma^0(2^{r+1},\p) \simeq M^0(\OL/2)$ given explicitly by considering
matrices of the form
\begin{equation}
\label{basiswithscalars}
I + 2^r \left( \begin{matrix} a  + b + c & b (\zeta + \zeta^{-1}) \\ c & a + b + c \end{matrix} \right)
\end{equation}
for~$a,b,c \in \OL/2$. 
Let~$\wQ$ inside this denote the subgroup~$(\OL/2)^2$ consisting of matrices of
the form
\begin{equation}
\label{moduleoldagain}
 \left(\begin{matrix} b+c & b(\zeta + \zeta^{-1})
 \\ c & b+c \end{matrix} \right).
\end{equation}
Then~$\wQ \simeq (\OL/2)^2$ as an abelian group,
but is is also preserved by the action of~$D_{n/2}$
by conjugation. Moreover, we also have an isomorphism
$$\Gamma^0(2^{r},\p)/\Gamma^0(2^{r+1},\p)  \simeq \OL/2 \oplus \wQ$$
of~$\F_2[D_{n/2}]$-modules.
The explicit action on this basis is given as follows: $R$, $T$, and~$U$ act
trivially on~$(a,0,0)$ and their action on~$(0,b,c)$  (by conjugation) satisfies:
\begin{equation}
\begin{aligned}
R.(0,b,c) - (0,b,c) = & \  b(0,0,\zeta+\zeta^{-1}) +  c(0,\zeta+\zeta^{-1}, (\zeta+\zeta^{-1})^2), \\
U.(0,b,c) - (0,b,c) = & \  b(0,0,\zeta+\zeta^{-1}), \\
T.(0,b,c) - (0,b,c) = & \  c(0,\zeta + \zeta^{-1},0)
 \end{aligned}
\end{equation}

\begin{lemma} \label{generate} $\wQ$ is generated as an~$\F_2[D_{n/2}]$-module by~$(0,1,0)$,
and~$(0,0,1)$.
\end{lemma}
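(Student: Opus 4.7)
The plan is to reduce the claim to a direct computation on an explicit $\F_2$-basis of $\wQ$. Since $n = 2^k$, the prime $\p$ is totally ramified with residue field $\F_2$ and uniformizer $\pi = \zeta+\zeta^{-1}$, so $\OL/2 \simeq \F_2[\pi]/(\pi^e)$. Writing elements of $\wQ$ as triples $(0,b,c)$ with $b,c\in\OL/2$ and setting $v_i := (0,\pi^i,0)$ and $w_i := (0,0,\pi^i)$ for $0 \le i < e$, these $2e$ vectors form an $\F_2$-basis of $\wQ$. So it will suffice to show that each $v_i$ and each $w_i$ lies in the $\F_2[D_{n/2}]$-submodule generated by $v_0 = (0,1,0)$ and $w_0 = (0,0,1)$.

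The key step is to rewrite the action formulas for $T$ and $U$ displayed immediately before the lemma as the nilpotent shift rules $(T-1)w_i = v_{i+1}$ and $(U-1)v_i = w_{i+1}$, together with $(T-1)v_i = (U-1)w_i = 0$. Once this is in hand, alternately applying $U-1$ and $T-1$ to $v_0$ produces the sequence $v_0, w_1, v_2, w_3, \ldots$, while alternately applying $T-1$ and $U-1$ to $w_0$ produces the complementary sequence $w_0, v_1, w_2, v_3, \ldots$. Running both chains until $\pi^e = 0$ kills them exhausts the entire basis $\{v_i,w_i\}_{0 \le i < e}$, proving the lemma.

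I do not anticipate a real obstacle here: the whole argument comes down to interpreting the two displayed actions as a pair of shifts that interchange the $b$- and $c$-slots while multiplying by the uniformizer, and then observing that iterating them lifts the constant vectors $v_0$ and $w_0$ to every power of $\pi$ in each slot. It is worth noting that the action of $R$ is not actually required; this is consistent with the fact that $T$ and $U$ already generate $D_{n/2}$, since $R = UT$.
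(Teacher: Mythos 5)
Your proof is correct and is essentially the same argument the paper gives, just spelled out more explicitly: the paper's one-line justification ("applying both $U$ and $T$ to get elements of the form $(0,\pi^i,0)$ and $(0,0,\pi^j)$") is precisely your alternation of the shift operators $(U-1)$ and $(T-1)$, which you correctly extract from the displayed action formulas.
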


\begin{proof}
Note that~$\OL/2$ is generated as an~$\F_2$ module by the powers of~$\pi = \zeta + \zeta^{-1}$.
But then the result is apparent by applying both~$U$ and~$T$ to get elements of the form~$(0,\pi^i,0)$
and~$(0,0,\pi^j)$ for any~$i$ and~$j$ which generate~$\wQ$.
\end{proof}

\begin{theorem} \label{2powertheorem} Assume~$n =2^k \ge 4$.
The group~$\Delta$ is a subgroup of~$\Gamma^0(\p) = \Gamma^0(1,\p)$.
Moreover:
\begin{enumerate}
\item The image of~$\Delta$ in~$\Gamma^0(1,\p)/\Gamma^0(2,\p)$ is isomorphic to~$D_{n/2}$.
\item 
The image of~$\Delta^0(2,\p)$ in~$\Gamma^0(2,\p)/\Gamma^0(4,\p)$ is isomorphic
to~$\Z/2\Z \oplus (\OL/2)^2$ given by~$\wQ$ in~(\ref{moduleoldagain})
together with~$\pm I$.
\item The image of~$\Delta^0(4,\p)$ in~$\Gamma^0(4,\p)$ is dense.
\end{enumerate}
In particular, if~$n=2^k$, so~$K$ has degree~$2^{k-2}$ and~$e=2^{k-2}$, the index is
$$\begin{aligned}
 \frac{[\Gamma:\Gamma^0(2,\p)]}{|D_{n/2}|} \cdot
\frac{|\OL/2|}{2} = &  \ \frac{ 2 |\SL_2(\OL/2)|}{n} \cdot \frac{|\OL/2|}{2} 
= \frac{  |\SL_2(\OL/2)| \cdot |\OL/2| }{n}  \\
= & \ \frac{6 \cdot 2^{3e-3} \cdot 2}{n} \cdot  2^{e-1} = 
\frac{3 \cdot 2^{4e-2} }{n} = 
3 \cdot 2^{2^k - k - 2}.
\end{aligned}
$$
\end{theorem}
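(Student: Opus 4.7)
The plan is to adapt the strategy of Section~\ref{2adicfirst} to the case $n=2^k$, modulo two technical differences: the change-of-basis matrix~$\QQ$ of~(\ref{Qmat}) is no longer a $2$-adic unit, so we must work directly in the coordinates of~(\ref{basiswithscalars}), and the image of~$\Delta$ mod~$\p$ is strictly smaller than~$\SL_2(\F_2)$. The containment $\Delta\subset\Gamma^0(\p)$ is immediate, since $T\equiv I\bmod\p$ while $U$ is lower-triangular mod~$\p$, so the image of~$\Delta$ in~$\SL_2(\F_2)$ sits in the lower-triangular subgroup.

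For part~(1), the image of $\Delta$ in $\SL_2(\OL/2)$ is $D_{n/2}$ by~\cite[Thm~1.1]{CM}; the elementary observation $\Gamma^0(1,\p)\cap\Gamma(2)=\Gamma^0(2,\p)$ (if a matrix is trivial mod~$2$, its upper-right entry lies in $2\subset 2\p$) shows that $\Gamma^0(1,\p)/\Gamma^0(2,\p)$ injects into $\SL_2(\OL/2)$, carrying the dihedral image faithfully. For part~(2), normal generators for $\Delta\cap\Gamma^0(2,\p)$ inside~$\Delta$ come from the dihedral presentation, namely $T^2$, $U^2$, and $R^{n/2}=-I$. Computation in the coordinates $(a,b,c)$ of~(\ref{basiswithscalars}) yields
$$R^{n/2}\mapsto(1,0,0),\quad T^2\mapsto(1,1,0),\quad U^2\mapsto(1,0,1),$$
all in $\F_2\oplus\wQ$. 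Since this subspace is $D_{n/2}$-stable (scalars are central and $\wQ$ is stable by its definition), the image lies in $\F_2\oplus\wQ$. Conversely, subtracting $(1,0,0)$ from the other two gives $(0,1,0)$ and $(0,0,1)$, and these generate $\wQ$ as a $D_{n/2}$-module by Lemma~\ref{generate}, giving the reverse containment.

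Part~(3) is the substantive claim: for every $r\ge 2$, the image of $\Delta\cap\Gamma^0(2^r,\p)$ in $\Gamma^0(2^r,\p)/\Gamma^0(2^{r+1},\p)\simeq M^0(\OL/2)$ is all of $M^0(\OL/2)$. The inductive step is routine: for $r\ge 2$, the identity $(I+2^r A)^2\equiv I+2^{r+1}A\bmod 2^{r+2}$ transfers surjectivity from level~$r$ to level~$r+1$. The main obstacle is the base case $r=2$, since the level-$1$ image is only $\F_2\oplus\wQ$ yet at level~$2$ one must produce the full $(\OL/2)\cdot I$ of scalars. Three ingredients accomplish this. First, the squares $T^4,U^4\in\Delta\cap\Gamma^0(4,\p)$ have explicit level-$2$ images at coordinates $(1,1,0)$ and $(1,0,1)$; $D_{n/2}$-conjugating these and applying Lemma~\ref{generate} to their $\wQ$-parts, then subtracting pairs of elements with equal scalar part, produces every element of $\wQ$ at level~$2$. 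Second, from $(1,1,0)$ in the image and $(0,1,0)\in\wQ$ now also in the image, subtraction yields $(1,0,0)$, placing the identity~$I$ in the level-$2$ image. Third, the commutator formula $[I+2A,I+2B]\equiv I+4(AB-BA)\bmod 8$ applied to level-$1$ $\wQ$-parts $A=W_1,B=W_2$ gives scalars in $\pi\cdot(\OL/2)\cdot I$ via a direct computation with the basis of $\wQ$; varying $W_1,W_2$ over $D_{n/2}$-conjugates fills out all of $\pi(\OL/2)\cdot I$. Combining $I$ with $\pi(\OL/2)\cdot I$ yields $(\F_2+\pi(\OL/2))\cdot I=(\OL/2)\cdot I$, and with $\wQ$ this exhausts $M^0(\OL/2)$.

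For the index, part~(3) implies $\wDelta\supseteq\Gamma^0(4,\p)$, so
$$[\Gamma:\wDelta]=\frac{[\Gamma:\Gamma^0(4,\p)]}{|\wDelta/\Gamma^0(4,\p)|}.$$
Parts~(1) and~(2) give $|\wDelta/\Gamma^0(4,\p)|=|D_{n/2}|\cdot|\F_2\oplus\wQ|=2n\cdot|\OL/2|^2$, while the numerator factors as $[\Gamma:\Gamma^0(2,\p)]\cdot|M^0(\OL/2)|=[\Gamma:\Gamma^0(2,\p)]\cdot|\OL/2|^3$. Dividing yields the formula in the theorem, and the final numerical expression follows by substitution using $e=2^{k-2}$, $|\OL/2|=2^e$, and $|\SL_2(\OL/2)|=6\cdot 2^{3e-3}$.
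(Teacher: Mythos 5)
Your treatment of the containment $\Delta \subset \Gamma^0(\p)$, of parts (1) and (2), and of the index computation follows the paper's route closely and is correct in outline. However, your argument for part (3) has a genuine gap, precisely at the point where the paper introduces the special element $V = U^2 R^{n/4} U^2 R^{-n/4}$.

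Concretely, consider the image of $\Delta^0(4,\p)$ in $\Gamma^0(4,\p)/\Gamma^0(4\p,\p) \simeq M^0(\F_2)$, which is three-dimensional. Since conjugation by $D_{n/2}$ acts \emph{trivially} on $\wQ/\pi\wQ$ (because $T, U, R$ all move $(b,c)$ by $\pi$-multiples), every $D_{n/2}$-conjugate of $T^4$ reduces mod $\p$ to the same vector $(1;1,0)$ in (scalar; $b$; $c$) coordinates, and every conjugate of $U^4$ to $(1;0,1)$; the commutator $[I+2A,I+2B] = I + 4(AB-BA)$ produces only $\pi(\OL/2)\cdot I$, which reduces to $0$ mod $\p$; and a square $(I+2A)^2$ with $\wQ$-part $q=(b,c)$ at level one contributes $(\overline{\det q}; q \bmod \p) = ((b+c)^2; b, c)$ mod $\p$. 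All of these land in the two-dimensional $\F_2$-span of $(1;1,0)$ and $(1;0,1)$, and nothing you write down escapes it. In particular $(0;1,0)$ is \emph{not} in the span. Your first ingredient therefore does not produce all of $\wQ$: subtracting pairs of conjugates with equal scalar part only yields the augmentation submodule, which equals $\pi\wQ$, not $\wQ$. Your second ingredient then assumes $(0,1,0)\in\wQ$ is already in the image, which it is not, so the deduction that $(1,0,0)$ is in the image is circular. You end up proving containment in an index-two subgroup of $\Gamma^0(4,\p)/\Gamma^0(8,\p)$, not equality.

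What is missing is an element of $\Delta$ producing a \emph{unit} scalar mod $\p$ at level two. The paper manufactures one explicitly: using $4\mid n$ it forms $R^{n/4}$ and computes $V = U^2 R^{n/4} U^2 R^{-n/4}$, then shows by a valuation count ($v(2) = e > 1 = v((1+\zeta)^2)$ for $n\ge 8$) that $V$ reduces to a nontrivial scalar in $\Gamma^0(4,\p)/\Gamma^0(4\p,\p)$. This supplies the missing third dimension, after which the squaring argument finishes. Your proof needs some such element — none of $T^4$, $U^4$, their $D_{n/2}$-conjugates, commutators, or generic squares can substitute, as the mod-$\p$ calculation above shows. (Also, a separate side remark: $T^2$ and $T^4$ have $b$-coordinate $1+\pi^{e-1}$, not $1$, since $2/\pi \equiv \pi^{e-1}\bmod 2$; the paper has the same imprecision, and it does not affect the argument because $\pi^{e-1}\in\pi\OL/2$ is absorbed by the commutator contribution, but it is worth noting so as not to conflate it with the genuine gap above.)
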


\begin{proof}
Let us assume that~$n \ge 8$.
The image of~$\Delta$ in~$\SL_2(\OL/2)$ is isomorphic to~$D_{n/2}$.
As follows from the computation of  Section~\ref{2adicfirst},
the images of~$T^2$, $U^2$, and~$R^{d}$ with~$d=n/2$ all lie in~$\Gamma^0(2,\p)$,
and all have the form~(\ref{moduleoldagain}).
This proves that the image of~$\Delta$ in~$\Gamma^0(1,\p)/\Gamma^0(2,\p)$
is~$D_{n/2}$, and (since matrices of the form~(\ref{moduleoldagain})  together with~$-I$ 
are invariant
under conjugation) that the image of~$\Delta^0(2,\p)$ in~$\Gamma^0(2,\p)/\Gamma^0(4,\p)$
is satisfies the required containment. Hence it suffices to show that the image contains all
of this group. On the other hand, the images of~$-I$, ~$U^2$ and~$T^2$ in this group in the basis
of~$\OL/2 \oplus \wQ \simeq (\OL/2)^3$ given by~$(a,b,c)$ as in equation~\ref{basiswithscalars} have the form:
$$(1,0,0), (1,0,1), (1,1,0),$$
and these generate the desired space by Lemma~\ref{generate}.

We now turn to showing that the image of~$\Delta^0(4,\p)$ in~$\Gamma^0(4,\p)$
is dense. The argument will be quite similar to the proof of Lemma~\ref{satisfied}.
Consider~$I+2A$ and~$I+2B$ in~$\Gamma^0(2,\p)/\Gamma^0(4,\p)$. Then the commutator
$$[I+2A,I+2B]  = I + 4(AB-BA)$$
is well-defined as an element of~$\Gamma^0(4,\p)/\Gamma^0(8,\p)$, and is a scalar matrix.
However, this does not generate the entire subspace~$\OL/2$ of scalar matrices, since the condition
that~$A$ and~$B$ have upper right entry divisible by~$\pi$ implies that the image is the index
two subspace~$\pi \OL/2$. Thus we need to find some more diagonal matrices.
To this end, recalling that~$R^{n/2} = -I$ and~$4|n$, we note that
$$R^{n/4} =
\left(\begin{matrix} 
\displaystyle{
i  \cdot \frac{1+\zeta}{1-\zeta} 
}
&
\displaystyle{
2 i  \cdot \frac{1+\zeta}{1 - \zeta} .
} \\
2i \cdot \displaystyle{ \frac{ \zeta}{-1 + \zeta^2}}
&
-i \cdot \displaystyle{\frac{1 + \zeta}{1 - \zeta}}
 \end{matrix}
\right),$$
where~$i = \zeta^{2^{k-2}}$. Note that the entries here still lie in~$K$
though this is not transparent from how it is written. From this, we compute
further that
$$V = U^2  R^{n/4} U^2 R^{-n/4} = I +   4  \cdot \frac{(1+\zeta)^2}{(1 - \zeta)^2}
\left( \begin{matrix} -1 & -2 \\  \displaystyle{2 + \frac{2 \zeta}{(1+\zeta)^2}} & 5 \end{matrix} \right)$$
Certainly the valuation of~$1+\zeta$ and~$1-\zeta$ are equal.
Moreover, the valuation of~$2$ is greater than the valuation of~$(1+\zeta)^2$
since~$n = 2^k \ge 8$. Hence the image of~$V$ in~$\Gamma^0(4,\p)/\Gamma^0(4 \p,\p)$
is a non-trivial scalar.

Now we are ready to prove that~$\Delta^0(4,\p)/\Delta^0(8,\p)$ is isomorphic
 to~$\Gamma^0(4,\p)/\Gamma^0(8,\p)$. Take~$A$ to be any element in~$\wQ$,
 so that we know~$I+2A \in \Delta^0(2,\p)/\Delta^0(4,\p)$. Then we find that
 \begin{equation}
 \label{squareagain}
 (I+2A)^2 = I + 4A + 4A^2 = 1 + 4B, \quad B \in \OL/2 \oplus \wQ
 \end{equation}
 is  a well-defined element of~$\Delta^0(4,\p)/\Delta^0(8,\p)$. Moreover,
 we find that~$A^2$ is scalar. If we take~$A$
 to be divisible by~$\pi$, then certainly~$A^2$ is divisible by~$\pi$, and
 thus since we have seen above using commutators that everything in~$\pi \OL/2$
 lies in the image of~$\Delta^0(4,\p)$ we see that everything in~$\pi \OL/2 \oplus \pi \wQ$
 lies in the image. In particular, we are reduced to showing that~$\Delta^0(4,\p)$
 surjects onto~$\Gamma^0(4,\p)/\Gamma^0(4\p,\p)$. The image of~$V$ gives 
 a non-trivial scalar matrix. But now by~(\ref{squareagain}) we see that anything in~$\wQ \bmod \pi$
 lies in the image up to this scalar, and hence everything is in the image.
 Thus we are done.
 Now the required identification
 $$\Delta^0(2^r,\p)/\Delta(2^{r+1},\p) \simeq
  \Gamma^0(2^r,\p)/\Gamma^0(2^{r+1},\p)$$
   follows from the case~$r=2$ by induction
  and the fact that for~$I+2^r A \in \Delta^0(2^r,\p)$ we have
  (now using that~$r \ge 2$) that~$(I+2^r A)^2 = I + 2^{r+1}A
  \in \Delta^0(2^{r+1},\p)$.
  Finally it remains to consider the case~$n=4$. In this case, the group~$\Delta_4$  is
  just the group~$\Gamma^0(2)$ of
 finite index~$3$ inside~$\SL_2(\Z)$, and the results hold (alternatively one can use the same
  proof as above after computing any suitable~$V$). \end{proof}

\section{The invariants $\delta$  and~$\kappa$}
\label{deltakappa}

Let us now consider the invariants~$\delta$ and~$\kappa$ introduced in~\cite{CM}.
As previously mentioned, for any cusp~$x \in \PP^1(K)$, the invariant~$\kappa(x)$
is defined to be~$0$ if~$x$ is in the orbit of~$\infty$ under~$\Delta$ and~$1$ otherwise.
(In particular, if~$n$ is odd and there is only one class of cusp, then~$\kappa(x)$ always
equals zero.) To prove that~$\kappa$ is not a congruence invariant for some even~$n$,
it suffices to construct cusps~$x$ adelically close to~$\infty$ such that~$\kappa(\infty) = 1$.
We phrase this more explicitly as follows. Suppose that
$$\gamma = \left( \begin{matrix} a & b \\ c & d \end{matrix} \right) \in \Delta.$$
Then~$\gamma(0) = b/d$ is  a cusp equivalent to~$0$, so  to
show that~$\kappa$ is non-congruence it suffices to find,
for any integer~$N$, an element with~$d \equiv 0 \bmod N$,
so~$[b/d] = [\infty]$ in~$\PP^1(\OL/N)$ but~$\kappa(b/d) = 1$.

The second invariant~$\delta$ is defined as follows.
Let~$n' =n$ if~$n$ is odd and~$n/2$ otherwise.
(The integer~$n'$
was denoted by~$m$ in~\cite{CM} but that conflicts with our
notation.)
The 
 image
of~$\Delta$ in~$\SL_2(\OL/2)$ is identified 
with~$D_{n'} = \langle r,t \rangle
\subset \mathrm{Sym}(\Z/m)$,
where~$r(x) = x+1$
and~$t(x) = -x$.
 The cusps of~$\Delta$ 
equivalent to $\infty$
 are given by
the coset~$\Delta/\langle T \rangle$, and the map~$\Delta \rightarrow \SL_2(\OL/2)$
sending~$T$ to~$t \in D_{n'}$ 
maps~$\Delta/\langle T \rangle$ to~$D_{n'}/\langle t \rangle$,
and then the latter is 
identified
with~$\Z/n' \Z$ by sending~$g$ to~$g(0)$;
this is the definition of~$\delta$ on the cusps
equivalent to~$\infty$.
Note that~$\delta$ 
can also be considered as
 a map
on~$\Delta$ itself by the formula
\begin{equation}
\label{deltaformula}
\delta(\gamma) = \delta(\gamma(\infty)).
\end{equation}

\subsection{Some Reductions} \label{curtadded}
In~\cite{CM},
the map~$\delta$ can be extended to
a function defined on all the cusps.
The proof of Theorem~\ref{curty} in this case 
 can be
reduced to the
rest of Theorem~\ref{curty}
as follows.
If $\delta$ is not congruence as a function on the
cusps equivalent to $\infty$, it is certainly not congruence as an invariant
on all of the cusps. Thus, since~$\delta$ \emph{is} a congruence
invariant on all cusps when $n=2^k$ (by \cite[Cor~1.5]{CM}),
and since there is only one orbit of cusps when $n$ is odd,
it suffices to consider the case when $2$ exactly divides~$n$.
If~$n=4g+2$, 
 it follows
from Theorem~\cite[3.2]{CM} that~$\delta$
(as a function on all the cusps) is a congruence invariant if and only if~$\kappa$
is congruence, and then the result follows from our claims
concerning~$\kappa$.

From this point onwards, we now regard~$\delta$ as  function
on the cusps equivalent to~$\infty$, or equivalently as a function
on the group~$\Delta$ as described in equation~\eqref{deltaformula}.

\subsection{The strategy}
As noted in~\cite{CM}, the map~$\delta \mod 2$ is a homomorphism but~$\delta$
is not a homomorphism in general. The map~$\delta$ on~$\Delta$
clearly only depends on the congruence class of~$\gamma \in \Delta$.
However, as a map on cusps, this is no longer necessarily true.
To show (for a particular even integer~$n$)  that~$\delta$ is not a congruence invariant
of the cusps equivalent to~$\infty$, it suffices
to find, for any integer~$N$, a~$\gamma \in \Delta$ such that:
\begin{enumerate}
\item $\gamma \equiv R^i \bmod 2$ where~$i \not\equiv 0 \bmod n' = n/2$.
\item The cusp~$\gamma(\infty) = [a:c]$ is equal to~$[1:0] \in \PP^1(\OL/N)$,
equivalently, that~$c \equiv 0 \bmod N$.
\end{enumerate}
The point is that the first condition implies that~$\delta(\gamma(\infty)) = i \bmod n/2$.
Since (by~\cite[Cor~1.7]{CM}) the invariant~$2 \delta$ \emph{is} congruence, this should
only be possible if~$4|n$ and~$i = n/4 \bmod n/2$.
Note that by the proof of~\cite[Theorem~1.13]{CM},  it indeed suffices
to find such a~$\gamma$ for~$N$ any power of~$2$, although we don't need to use this fact.

 \begin{theorem} \label{maindeltatwo}
The map~$\delta \mod 2^{k-1}$ is a not a congruence invariant 
on the cusps equivalent to~$\infty$
if~$n = 2^k m$ where~$k \ge 2$ and~$m > 1$ is odd.
\end{theorem}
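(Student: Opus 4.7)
The plan is to apply the recipe given just above the theorem: for each integer $N$, we construct $\gamma \in \Delta$ with $\gamma \equiv R^i \bmod 2$ for some exponent $i$ satisfying $i \not\equiv 0 \bmod 2^{k-1}$ and with lower-left entry $c \equiv 0 \bmod N$. Such a $\gamma$ forces $\delta(\gamma(\infty)) \equiv i \bmod n/2$, whose reduction modulo $2^{k-1}$ is nonzero, so $\delta \bmod 2^{k-1}$ distinguishes $\gamma(\infty)$ from $\infty$ even though these cusps are congruent mod $N$.

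The natural choice is $i = n/4 = 2^{k-2} m$, since $m$ odd yields $i \equiv 2^{k-2} \not\equiv 0 \bmod 2^{k-1}$. A direct computation using the diagonalization $R = Q\,\mathrm{diag}(\zeta^{-1},\zeta)Q^{-1}$ from~(\ref{Qmat})--(\ref{nicebasis}) gives
\[
(R^i)_{21} \;=\; \frac{\zeta^{-i}-\zeta^i}{\zeta^{-1}-\zeta} \;=\; \frac{-2\zeta^i}{\zeta^{-1}-\zeta},
\]
the second equality using $\zeta^{2i} = \zeta^{n/2} = -1$. Since $n$ has multiple prime factors, both $1-\zeta$ and $1+\zeta$ are global units, so $\zeta^{-1}-\zeta$ is a $2$-adic unit and $(R^i)_{21} = 2u$ for some $2$-adic unit $u \in \OL$.

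Having ensured $c(R^i) \in 2\OL$, we will right-multiply $R^i$ by an element $\tilde h$ of the adelic closure $\wDelta$ to clear $c$ modulo $N$, then appeal to density of $\Delta$ in $\wDelta$. By Lemma~\ref{pbyp}, $\wDelta$ is the direct product of the $\wDelta_p$. Because $n = 2^k m$ with $k \ge 2$ and $m > 1$ odd is neither of the form $2p^k$ nor $5\cdot 3^k$, Theorem~\ref{awayfrom2} yields $\wDelta_p = \wGamma_p$ for every odd $p$, and at each odd prime $p \mid N$ we simply set $\tilde h_p = (R^i)^{-1}$ to kill $c$ locally. At $p = 2$, Theorem~\ref{2part} supplies both ingredients. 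Writing $d_0 = (R^i)_{22}$ (a $2$-adic unit), the mod-$4$ equation for the lower-left of $R^i(I+2H)$ to vanish reduces to $u + d_0 H_{21} \equiv 0 \bmod 2$, which is solvable because the image $\F_2 \oplus \wQ$ of $\Delta(2)/\Delta(4)$ inside $I+2M^0(\OL/2)$ contains $\wQ$-elements with arbitrary $(2,1)$-entry. Iterating via $\wDelta(4)_2 = \wGamma(4)_2$ from Theorem~\ref{2part}(1), and solving a similar linear equation modulo each successive power of $2$ (again because $d_0$ is a unit), we obtain $\tilde h_2 \in \wDelta(2)_2$ with $c(R^i \tilde h_2) \equiv 0 \bmod 2^s$ for any prescribed $s$. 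The chief obstacle is this mod-$4$ step, which relies essentially on the precise description of $\Delta(2)/\Delta(4)$ supplied by Theorem~\ref{2part}(2); all subsequent doublings are routine.
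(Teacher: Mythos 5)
Your proof is correct and follows essentially the same strategy as the paper: start from $R^{n/4}$, clear its lower-left entry to arbitrary $2$-power precision using Theorem~\ref{2part}, handle odd primes via Theorem~\ref{awayfrom2}, and then approximate the resulting adelic element by a genuine $\gamma \in \Delta$. You spell out more of the linear algebra (in particular the unit condition on the diagonal entry of $R^{n/4}$ that makes each successive clearing step solvable, which the paper leaves implicit) and you right-multiply rather than left-multiply by the correction element, but these differences are cosmetic.
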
 
 
 \begin{proof}
 We begin by
considering the element
$$\gamma_1:=R^{2^{k-2}m}  \equiv I + \left( \begin{matrix} i-1 & 0 \\ 0 & i-1 \end{matrix} \right) \mod 2.$$
Note that~$n/4 = 2^{k-2}m$.
By construction, we therefore have
$$\delta(R^{2^{k-2}m}) \equiv n/4 \bmod n/2.$$
Since~$\delta$ on~$\Delta$ depends only on the image in~$\Delta/\Delta(2) \subset \SL_2(\OL/2)$,
it follows that any lift of this element has this property.
We now consider~$\gamma_1 = R^{n/4} \mod 4$. 
By Theorem~\ref{2part} (we are assuming~$n$ is not a power of~$2$),
 we can inductively find $\eta_r$ in~$\Delta(2^r)/\Delta(2^{r+1})$
so that~$\gamma_{r+1}:=\eta_r \gamma_r \mod 2^{r+1}$ also has zero in the lower left corner.
Inductively, we construct a limit point~$\gamma_{\infty} \in \wDelta_2$ with a zero in the lower left hand corner.
Similarly,  we deduce
from Theorem~\ref{awayfrom2}  that we may find~$\gamma_{\infty} \in \wDelta$ with the same property.
(The condition that~$4|n$ implies that the closure of the image away from~$2$  is everything.)
Now take any~$\gamma \in \Delta$ which is congruent to~$\gamma_{\infty}$ modulo~$N$ for any integer~$N$,
we deduce that~$\gamma(\infty) = [a:c]$ reduces to~$\infty$ in~$\PP^1(\OL/N)$, 
but~$\delta(a/c) = \delta(\gamma) = n/4 \bmod n/2$, and hence~$\delta$ is not a congruence invariant modulo~$N$
for any integer~$N$.
\end{proof}

We now turn to~$\kappa$. We begin with a preliminary lemma.

\begin{lemma} \label{kappa2} 
Assume that~$n=2m$ where~$m$ is odd.
There exists an element~$\gamma_{\infty}$ in the closure
of~$\Delta$ in~$\SL_2(\OL \otimes \Z_2)$
of the form
$$\left( \begin{matrix} 0 & * \\ * & * \end{matrix} \right) \bmod 2^k.$$
\end{lemma}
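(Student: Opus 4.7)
The plan is to first exhibit an explicit element $\gamma_1 \in \Delta$ whose upper-left entry is divisible by $2$ and whose upper-right entry is a $2$-adic unit, and then to refine it iteratively by right-multiplying with elements of $\wDelta(2^k)_2$ so as to kill the upper-left entry modulo $2^{k+1}$ at each stage. Theorem~\ref{2part} provides exactly enough elements in $\wDelta_2$ to carry out each refinement, so the $2$-adic limit will give a $\gamma_\infty \in \wDelta_2$ with $(\gamma_\infty)_{11} = 0$.

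For the initial element take $\gamma_1 := R^{(m+1)/2}$, which is well-defined since $m$ is odd. Using the diagonalization~(\ref{nicebasis}) one computes
$$(R^j)_{11} = \frac{(1+\zeta)\zeta^{-j} - (1+\zeta^{-1})\zeta^j}{\zeta - \zeta^{-1}}, \qquad (R^j)_{12} = -(1+\zeta)(1+\zeta^{-1})\cdot \frac{\zeta^j - \zeta^{-j}}{\zeta - \zeta^{-1}}.$$
Modulo~$2$, the numerator of $(R^j)_{11}$ factors as $(1+\zeta)\zeta^{-j}(1+\zeta^{2j-1})$; for $j=(m+1)/2$ the last factor is $1+\zeta^m$, which vanishes modulo every prime above $2$ in $\OL_L$. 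Since $n=2m$ with $m$ odd makes $L$ unramified at~$2$, it follows that $(\gamma_1)_{11} \in 2\OL$. On the other hand $(1+\zeta)(1+\zeta^{-1})$ has odd norm and hence is a $2$-adic unit, while $(\zeta^j-\zeta^{-j})/(\zeta-\zeta^{-1})$ is nonzero in each residue field $\OL_L/\P$ for $j=(m+1)/2 < m$ (as $\zeta$ has exact order $m$ there); therefore $(\gamma_1)_{12}$ is a unit in $\OL/2$.

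Now induct. Given $\gamma_k \in \wDelta_2$ with $(\gamma_k)_{11} \in 2^k(\OL \otimes \Z_2)$ and $(\gamma_k)_{12}$ still a unit modulo~$2$, choose $h_k \in \wDelta(2^k)_2$ with image $I + 2^k Y$ in $\Gamma(2^k)/\Gamma(2^{k+1})$ for some $Y \in M^0(\OL/2)$. Then
$$(\gamma_k h_k)_{11} \equiv (\gamma_k)_{11} + 2^k (\gamma_k)_{12}\, Y_{21} \pmod{2^{k+1}},$$
so since $(\gamma_k)_{12}$ is a unit we can solve uniquely for $Y_{21} \in \OL/2$ to make this vanish. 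For $k=1$ the existence of such an $h_1$ is guaranteed by Theorem~\ref{2part}(2) together with~(\ref{moduleoldq}): the image of $\wDelta(2)/\wDelta(4)$ contains all of $\wQ$, whose lower-left entry $c$ ranges freely over $\OL/2$. For $k \ge 2$ we have $\wDelta(2^k)_2 = \wGamma(2^k)_2$ by Theorem~\ref{2part}(1), so $h_k$ may be chosen arbitrarily. The new element $\gamma_{k+1} := \gamma_k h_k$ satisfies the inductive hypotheses (its upper-right entry agrees with $(\gamma_k)_{12}$ modulo $2^k$, hence remains a unit mod~$2$), and the Cauchy sequence $(\gamma_k)$ converges in $\wDelta_2$ to the desired $\gamma_\infty$.

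The only real difficulty is the explicit identification of $\gamma_1$ in the second paragraph; the inductive step is a standard Hensel-type bootstrap, available because Theorem~\ref{2part} guarantees enough elements of the right shape in $\wDelta_2$.
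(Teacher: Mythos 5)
Your proof is correct and takes essentially the same approach as the paper: both start from the element $\gamma_1 = R^{(m+1)/2}$ (the paper writes the exponent as $(n+2)/4$, which is the same thing for $n=2m$), verify that its upper-left entry lies in $2\OL$ while adjacent entries are $2$-adic units, and then iteratively kill the upper-left corner modulo $2^{k+1}$ using the image of $\Delta(2^k)$ in $\Gamma(2^k)/\Gamma(2^{k+1})$ supplied by Theorem~\ref{2part}. If anything, your bookkeeping is slightly more explicit than the paper's (you isolate $Y_{21}$ as the parameter to solve for and track the unit property of the upper-right entry through the induction, whereas the paper's reference to choosing $b+c$ in $\wQ$ is a little loose), but the underlying argument is the same.
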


\begin{proof}
We proceed by induction. Since~$n \equiv 2 \mod 4$, we may
take~$i = (n+2)/4$ and then
$$\gamma_1 = R^i =  \frac{1}{\zeta^i(1-\zeta^2)}
\left( \begin{matrix} (1 + \zeta)(\zeta^{2i} - \zeta) &
(1 + \zeta)^2(1-\zeta^i)^2 \\
\zeta(\zeta^i - 1)^2 & \zeta(1 + \zeta)(\zeta^{2i} - \zeta^{-1})
\end{matrix} \right).$$
Since~$m > 1$,  the leading factor is a unit.
But~$\zeta^{2i-1} = \zeta^{n/2} = -1$, so~$\zeta^{2i} - \zeta
\equiv 0 \bmod 2$. Hence~$\gamma_1$ is such an element modulo~$2$.
We now consider~$\gamma_1 \bmod 4$. 
By Theorem~\ref{2part} (we are assuming~$n$ is not a power of~$2$),
we can adjust~$\gamma_1 \bmod 4$ by any element~$\eta_1$ in~$\wQ$,
which have the form:
$$
\left(\begin{matrix} b+c & b(\zeta + \zeta^{-1})
 \\ c & b+c \end{matrix} \right) 
$$
In particular, since we can choose~$b+c \in \OL/2$ to be anything.
 we can ensure that~$\gamma_2 = \eta_1 \gamma_1$ has a~$0$
in the first entry. 
Just as in the proof of Theorem~\ref{maindeltatwo},
we proceed by induction to find the required element~$\gamma_{\infty}$.
\end{proof}

\begin{theorem} \label{kappacong} The function~$\kappa$ is not a congruence invariant if~$n = 2m$ where~$m$
is odd and divisible by at least two prime factors.
\end{theorem}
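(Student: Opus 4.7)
The plan is to show that for every $N$, there exists an element
$\gamma = \left( \begin{matrix} a & b \\ c & d \end{matrix} \right) \in \Delta$
whose upper-left entry $a$ is divisible by $N$. Such a $\gamma$ produces the cusp $x = \gamma(\infty) = a/c$, which lies in the $\Delta$-orbit of $\infty$, so $\kappa(x) = 0$; but $\det \gamma = 1$ forces $c$ to be a unit modulo $N$ once $a \equiv 0 \bmod N$, so the image of $x$ in $\PP^1(\OL/N)$ equals $[0:1]$, which is the reduction of the cusp $0$ where $\kappa = 1$. Hence $\kappa$ cannot depend only on the class in $\PP^1(\OL/N)$, for any $N$.

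To produce such a $\gamma$, I would construct a target in the adelic closure $\wDelta$ and then appeal to density. At the prime $2$, Lemma~\ref{kappa2} directly provides an element $\gamma_\infty^{(2)} \in \wDelta_2$ with upper-left entry $\equiv 0$ to any prescribed $2$-adic precision. At each odd prime $p$, the hypotheses on $m$ are precisely what rules out the two exceptional cases of Theorem~\ref{awayfrom2}: the form $n = 5 \cdot 3^k$ is excluded because $n = 2m$ is even, and the form $n = 2p^k$ is excluded because $m$ has more than one prime factor. Therefore $\wDelta_p = \SL_2(\OL \otimes \Z_p)$ at every odd $p$, and in particular contains elements with upper-left entry vanishing modulo any power of $p$. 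Using the product decomposition $\wDelta = \prod_p \wDelta_p$ also supplied by Theorem~\ref{awayfrom2}, I would assemble a single $\gamma_\infty \in \wDelta$ with $a \equiv 0 \bmod N$. Density of $\Delta$ in $\wDelta$ then yields $\gamma \in \Delta$ with $\gamma \equiv \gamma_\infty \bmod N$.

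The only step of real substance is the verification that the two-prime-factor hypothesis precisely excludes the odd-prime exceptions of Theorem~\ref{awayfrom2}; everything else is a routine assembly of Lemma~\ref{kappa2}, the product decomposition, and profinite density. It is worth remarking that this hypothesis is genuinely necessary: if $m = p^k$, then the image of $\Delta$ in $\SL_2(\F_p)$ consists only of powers of $U$, so the upper-left entry is always a unit at $p$, and the construction collapses at that prime.
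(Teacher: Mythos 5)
Your proposal is correct and follows essentially the same route as the paper: construct an element of $\wDelta$ with upper-left entry vanishing to arbitrary precision by combining Lemma~\ref{kappa2} at the prime $2$ with the full-image statement of Theorem~\ref{awayfrom2} at odd primes (the hypotheses on $m$ ruling out both exceptional cases), then use density and the determinant argument to conclude that $\kappa$ collides on $[0:1]\in\PP^1(\OL/N)$. The only difference is cosmetic: you spell out the unit argument for $c$ and the exclusion of the exceptional cases more explicitly than the paper does.
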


\begin{proof} 
We claim that for any integer~$N$, there exists a~$\gamma \in \Delta$ 
such that
$$\gamma  = \left(\begin{matrix} a & b \\ c & d \end{matrix} \right)
\equiv \left( \begin{matrix} 0 & * \\  * & * \end{matrix} \right) \bmod N.$$
By Theorem~\ref{awayfrom2}, it suffices to show that this can be achieved when~$N$ is any prime power.
By Theorem~\ref{awayfrom2} and the assumption that~$n$ has at least three prime factors,
it follows that~$\wDelta_p = \wGamma_p$, and hence contains the image of~$\Delta$ contains the element
$$\left( \begin{matrix} 0 & 1 \\ -1 & 0 \end{matrix} \right)$$
modulo any power of an odd prime~$p$. On the other hand, by Lemma~\ref{kappa2},
there exists an element of the required form modulo any power of~$2$, which proves the claim.
But that means that the cusp~$\gamma(\infty) = (a,c) \in \PP^1(\OL/N)$
maps to the image of~$0$ in~$\PP^1(\OL/N)$, and so~$\kappa$ is not a congruence
invariant modulo~$N$. Since this holds for any integer~$N$, we are done.
\end{proof}

\bibliographystyle{amsalpha}
\bibliography{Chaos}

\providecommand{\bysame}{\leavevmode\hbox to3em{\hrulefill}\thinspace}
\providecommand{\MR}{\relax\ifhmode\unskip\space\fi MR }
\providecommand{\MRhref}[2]{%
  \href{http://www.ams.org/mathscinet-getitem?mr=#1}{#2}
}
\providecommand{\href}[2]{#2}
\begin{thebibliography}{McM24}

\bibitem[Gor80]{Gorenstein}
Daniel Gorenstein, \emph{Finite groups}, second ed., Chelsea Publishing Co.,
  New York, 1980. \MR{569209}

\bibitem[Hal59]{Marshall}
Marshall Hall, Jr., \emph{The theory of groups}, The Macmillan Company, New
  York, 1959. \MR{103215}

\bibitem[McM22]{BilliardsHeights}
Curtis~T. McMullen, \emph{Billiards, heights, and the arithmetic of
  non-arithmetic groups}, Invent. Math. \textbf{228} (2022), no.~3, 1309--1351.
  \MR{4419633}

\bibitem[McM23]{BilliardsRegular}
\bysame, \emph{Billiards in regular polygons}, 2023, preprint.

\bibitem[McM24]{CM}
\bysame, \emph{Triangle groups: cusps, congruence and chaos}, 2024, preprint.

\bibitem[Rib75]{Ribet}
Kenneth~A. Ribet, \emph{On {$l$}-adic representations attached to modular
  forms}, Invent. Math. \textbf{28} (1975), 245--275. \MR{419358}

\bibitem[SD73]{SWD}
H.~P.~F. Swinnerton-Dyer, \emph{On {$l$}-adic representations and congruences
  for coefficients of modular forms}, Modular functions of one variable, {III}
  ({P}roc. {I}nternat. {S}ummer {S}chool, {U}niv. {A}ntwerp, {A}ntwerp, 1972),
  Lecture Notes in Math., Vol. 350, Springer, Berlin-New York, 1973, pp.~1--55.
  \MR{406931}

\bibitem[Ser72]{Serre}
Jean-Pierre Serre, \emph{Propri\'{e}t\'{e}s galoisiennes des points d'ordre
  fini des courbes elliptiques}, Invent. Math. \textbf{15} (1972), no.~4,
  259--331. \MR{387283}

\end{thebibliography}

\end{document}